\definecolor{purple}{rgb}{0.65, 0, 0.9}
\definecolor{orange}{rgb}{1,.5,0}
\definecolor{gray}{rgb}{0.7,.7,0.7}
\def\@abssec#1{\vspace{.1in}\footnotesize \parindent .2in
{\bf #1. }\ignorespaces}
\newtheorem{theorem}{Theorem}[section]
\newtheorem{lemma}[theorem]{Lemma}
\newtheorem{proposition}[theorem]{Proposition}
\newtheorem{remark}[theorem]{Remark}
\def \R {\mathbb R}
\def \T {\mathbb T}
\def \N {\mathbb N}
\def \Z {\mathbb Z}
\newcommand{\be}{\mathbf e}
\allowdisplaybreaks \numberwithin{equation}{section}
\renewcommand{\be}{\begin{equation}}
\newcommand{\ee}{\end{equation}}
\begin{document}

\title[Small scale formation for the 2D Boussinesq equation]{Small scale formation for the 2D Boussinesq equation}
\author{Alexander Kiselev}
\thanks{Department of
Mathematics, Duke University, 120 Science Dr., Durham NC 27708, USA;
email: kiselev@math.duke.edu}

\author{Jaemin Park}
\thanks{
Department of Mathematics and Computer Science, University of Basel,  Spiegelgasse 1, 4051 Basel, Switzerland. email: jaemin.park@unibas.ch}

\author{Yao Yao}
\thanks{
Department of Mathematics, National University of Singapore,  10 Lower Kent Ridge Road, Singapore 119076. email: yaoyao@nus.edu.sg}

\begin{abstract}We study the 2D incompressible Boussinesq equation without thermal diffusion, and aim to construct rigorous examples of small scale formations as time goes to infinity. In the viscous case, we construct examples of global smooth solutions satisfying $\sup_{\tau\in[0,t]} \|\nabla \rho(\tau)\|_{L^2}\gtrsim t^\alpha$ for some $\alpha>0$. For the inviscid equation  in the strip, we construct examples satisfying $\|\omega(t)\|_{L^\infty}\gtrsim t^3$ and $\sup_{\tau\in[0,t]} \|\nabla \rho(\tau)\|_{L^\infty} \gtrsim t^2$ during the existence of a smooth solution. These growth results hold for a broad class of initial data, where we only require certain symmetry and sign conditions. 
As an application, we also construct solutions to the 3D axisymmetric Euler equation whose velocity has infinite-in-time growth.\end{abstract}

\maketitle

\section{Introduction}
The incompressible Boussinesq equations describe the motion of incompressible fluid under the influence of gravitational forces \cite{gill1982atmosphere, majda2003introduction, pedlosky1987geophysical}. Let us denote by $\rho(x,t)$ the density of the fluid (it can also represent the temperature, depending on the physical context), and $u(x,t)$ the velocity field. Throughout this paper, we consider the  2D incompressible Boussinesq equation in the absence of density/thermal diffusivity:
\begin{equation}\label{Boussinesq}
\begin{split} &\rho_t + u \cdot \nabla \rho = 0,  \\
&u_t + u\cdot \nabla u = -\nabla p - \rho e_2 + \nu \Delta u,  \qquad x\in\Omega, t>0,\\
&\nabla \cdot u = 0, \\
\end{split}
\end{equation}
where the initial condition is $u(\cdot,0)=u_0$ and $\rho(\cdot,0)=\rho_0$.
Here $e_2 := (0,1)^T$, and $\nu \geq 0$ is the viscosity coefficient. We assume the spatial domain $\Omega$ is one of the following: the whole space $\R^2$, the torus $\mathbb{T}^2:= \left(- \pi ,\pi \right]^2$, or the strip $\mathbb{T}\times [0,\pi]$ that is periodic in $x_1$. When $\Omega$ is the strip, we impose the  no-slip boundary condition $u|_{\partial \Omega}=0$ if $\nu>0$; and the no-flow boundary condition $u\cdot n|_{\partial \Omega}=0$ if $\nu=0$.

In the past decade, much progress has been made on the analysis of \eqref{Boussinesq} in both the viscous case $\nu>0$ and inviscid case $\nu=0$. Below we briefly review the relevant literature, and state our main results in each case. 

\subsection{The viscous case $\nu>0$.} If the equation for $\rho$ has an additional thermal diffusion term $\kappa\Delta\rho$, global regularity of solutions is well-known (see e.g. \cite{temam2012infinite}) and follows  from the classical methods for Navier--Stokes equations. In the absence of thermal diffusion, the first global-in-time regularity results were obtained by Hou--Li \cite{HouLi2005} in the space $(u,\rho)\in H^m(\mathbb{R}^2)\times H^{m-1}(\mathbb{R}^2)$ for $m\geq 3$, and Chae \cite{Chae2006} in the space $H^m(\mathbb{R}^2)\times H^m(\mathbb{R}^2)$ for $m\geq 3$. When $\Omega\subset\mathbb{R}^2$ is a bounded domain, Lai--Pan--Zhao \cite{LaiPanZhao2011} proved global well-posedness of solutions in $H^3(\Omega)\times H^3(\Omega)$ with no-slip boundary condition, and showed that the kinetic energy is uniformly bounded in time. The function space was improved by Hu--Kukavica--Ziane \cite{HKZ2013} to $(u,\rho)\in H^m(\Omega)\times H^{m-1}(\Omega)$ for $m\geq 2$, where $\Omega$ is either a bounded domain or $\mathbb{R}^2$, $\mathbb{T}^2$. In spaces with lower regularity, global well-posedness of weak solutions was obtained by Abidi--Hmidi \cite{MR2290277},  Hmidi--Keraani \cite{MR2305876}, Danchin--Paicu \cite{DP2011}, and Larios--Lunasin--Titi \cite{LLT2013}. For the temperature patch problem, Gancedo--Garc{\'\i}a-Ju{\'a}rez \cite{gancedo2017global,gancedo2020regularity} proved global regularity in 2D, and local regularity in 3D.

Regarding upper bounds of the global-in-time solutions, for a bounded domain, Ju~\cite{Ju2017} obtained that
$\|\rho\|_{H^1(\Omega)} \lesssim e^{Ct^2}$. The $e^{Ct^2}$ bound was improved into an exponential bound $e^{Ct}$ in Kukavica--Wang \cite{KW2020} for $\Omega=\mathbb{T}^2$ or a bounded domain, and a super-exponential bound $e^{Ct^{(1+\beta)}}$ for some constant $\beta\approx 0.29$ for $\Omega=\mathbb{R}^2$. When $\Omega=\mathbb{T}^2$, they also obtained the uniform-in-time bound $\|u\|_{W^{2,p}(\mathbb{T}^2)}\leq C(p)$ for all $p\in[2,\infty)$. In a recent work by Kukavica--Massatt--Ziane \cite{kukavica2021asymptotic}, when $\Omega$ is a bounded domain, the upper bound of the norm of $\rho$ has been improved to $\|\rho\|_{H^2(\Omega)}\leq C_\epsilon e^{\epsilon t}$ for all $\epsilon>0$, and they also showed $\|u\|_{H^3}\leq C_\epsilon e^{\epsilon t}$ for all $\epsilon>0$.

We would like to point out that all these results deal with \emph{upper bounds} of solutions, and it is a natural question whether certain norms of solutions \emph{can} actually grow to infinity as $t\to\infty$. When $\nu>0$ and $\Omega=\mathbb{R}^2$, Brandolese--Schonbek \cite{BS2012} proved that when the initial data $\rho_0$ does not have mean zero, $\|u(t)\|_{L^2(\mathbb{R}^2)}$ must grow to infinity like $ (1+t)^{1/4}$. Here the growth mechanism is due to potential energy converting into kinetic energy, and does not necessarily imply growth in higher derivatives of $u$ or $\rho$. To the best of our knowledge, there has been no example in literature showing that $\|\rho(t)\|_{\dot{H}^m}$ or $\|u(t)\|_{\dot{H}^m}$ can actually grow to infinity as $t\to\infty$ for some $m\geq 1$. The goal of this paper is exactly to construct such examples in $\mathbb{R}^2$ and $\mathbb{T}^2$ where $\|\rho(t)\|_{\dot{H}^m}\to\infty$ as $t\to\infty$ for all $m\geq 1$. Since $\|\rho(t)\|_{L^2}$ is preserved in time, growth of $\|\rho(t)\|_{\dot{H}^m}$ implies that $\rho$ has some small scale formation as $t\to\infty$.

\medskip
 In the viscous case, we set the spatial domain to be either $\mathbb{R}^2$ or $\mathbb{T}^2$, and assume that the initial data $(\rho_0,u_0)$ satisfies the following assumptions (here we denote $u_0 = (u_{01}, u_{02})^T$). See Figure~\ref{fig1} for an illustration of the assumptions on $\rho_0$.
\begin{enumerate}[label=\textbf{(A\arabic*)}]
\item \label{assumption1_rho} $\rho_0, u_0 \in C^\infty(\Omega)$. If $\Omega = \mathbb{R}^2$, assume in addition that $\rho_0, u_0 \in C^\infty_c(\R^2)$.
\item\label{assumption2_rho}$\rho_0$  and $u_{02}$ are odd in $x_2$, and $u_{01}$ is even in $x_2$. If $\Omega = \mathbb{T}^2$, assume in addition that $\rho_0$  and $u_{02}$ are even in $x_1$,  $u_{01}$ is odd in $x_1$, and $\rho_0=0$ on the $x_2$-axis.\footnote{Note that if the $\rho_0=0$ on the $x_2$-axis   assumption is removed, the initial data would include some steady states with horizontally stratified density, which clearly would not lead to any growth.}

 \item\label{assumption3_rho} $\rho_0$ is not identically zero, and $\rho_0 \ge 0$ for $x_2\ge 0$.
\end{enumerate}

\begin{figure}
\hspace{0.3cm}\includegraphics[scale=1]{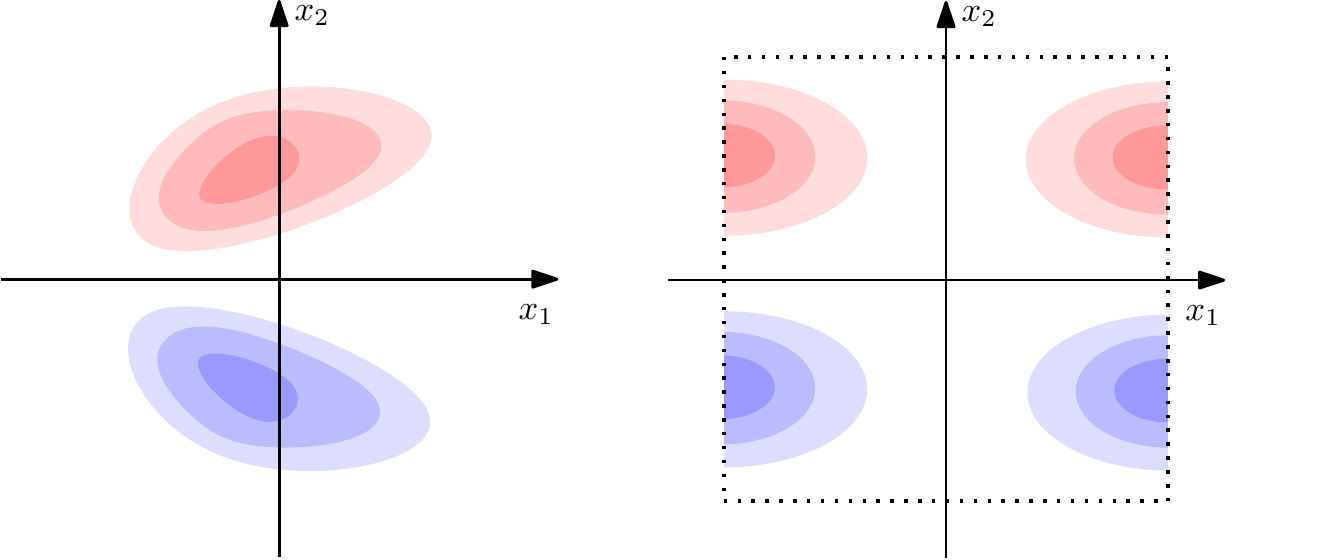}
\caption{Illustration of the symmetry and sign assumptions on $\rho_0$ in the plane (left) and torus $\mathbb{T}^2$ (right) for the viscous Boussinesq equation. Here red color denotes positive $\rho_0$, and blue color denotes negative $\rho_0$. \label{fig1}}
\end{figure}

As we will see in Section~\ref{sec21}, under these assumptions, both the potential energy $E_P(t) := \int_\Omega \rho(x,t) x_2 dx$ and kinetic energy $E_K(t)=\frac{1}{2}\|u(t)\|_{L^2(\Omega)}^2$ of the solution remain bounded for all times, and the total energy is decreasing in time. We prove that for all $s\geq 1$, the Sobolev norm $\|\rho(t)\|_{\dot{H}^s}$ grows to infinity at least algebraically in $t$:

\begin{theorem}\label{infinite_growth}
Assume $\nu>0$, and let $\Omega = \R^2$ or $\mathbb{T}^2$. For any initial data $(\rho_0,u_0)$ satisfying \textbf{\textup{(A1)}}--\textbf{\textup{(A3)}}, the global-in-time smooth solution $(\rho,u)$ to \eqref{Boussinesq} satisfies the following:
\begin{itemize}
\item If $\Omega = \mathbb{R}^2$, we have 
\begin{equation}
\limsup_{t\to \infty}t^{-\tfrac{s}{10}}\| \rho(t)\|_{\dot{H}^s(\Omega)} = +\infty \quad \text{ for all }s\ge 1;\label{growth1}
\end{equation}
\item  If $\Omega = \mathbb{T}^2$, we have
\begin{equation}
\limsup_{t\to \infty}t^{-\tfrac{s(2s -1)}{8s -2}}\| \rho(t)\|_{\dot{H}^s(\Omega)}= +\infty  \quad \text{ for all }s\ge 1.\label{growth2}
\end{equation}
\end{itemize}
\end{theorem}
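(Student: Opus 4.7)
The plan is to argue by contradiction: suppose there exists $s\ge 1$ and a constant $C$ with $\|\rho(t)\|_{\dot H^s(\Omega)}\le C\,t^{\alpha_s}$ for all large $t$, where $\alpha_s$ denotes the claimed exponent in \eqref{growth1}--\eqref{growth2}, and derive a contradiction from three a priori facts: a strictly positive lower bound on the potential energy $E_P$, dissipation of total energy by viscosity, and a quantitative mismatch between a polynomial $\dot H^s$-bound for $\rho$ and the enforced convergence of $\rho$ to a singular stratified profile.

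First, I verify the preserved structure. The parity conditions in \textbf{(A2)} make the coordinate axes invariant streamlines for $u$, so the transport equation for $\rho$ preserves $\rho(\cdot,t)\ge 0$ on $\{x_2>0\}$; incompressibility gives $\|\rho(t)\|_{L^p}=\|\rho_0\|_{L^p}$ for all $p\in[1,\infty]$. A rearrangement inequality then yields
\[
E_P(t):=\int_\Omega \rho(x,t)\,x_2\,dx \;\ge\; c_1 > 0,
\]
with $c_1\sim c_0^2/M_0$, where $M_0:=\|\rho_0\|_{L^\infty}$ and $c_0:=\int_{\{x_2>0\}}\rho_0\,dx>0$: subject to $0\le\rho\le M_0$ and fixed mass $c_0$ in the upper half, the functional $\int\rho\,x_2$ is minimized by packing $\rho$ in a thin horizontal strip adjacent to $\{x_2=0\}$, producing a characteristic-function-type profile $\chi_\star$. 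The classical energy identity $\frac{d}{dt}(E_K+E_P)=-\nu\|\nabla u\|_{L^2}^2$ combined with this lower bound gives $\int_0^\infty\|\nabla u\|_{L^2}^2\,dt<\infty$. The symmetries force $u$ to have zero mean on $\mathbb T^2$, so Poincar\'e upgrades this to $E_K\in L^1([0,\infty))$; together with the uniform-in-time bounds on $\|u\|_{W^{2,p}}$ from \cite{KW2020,kukavica2021asymptotic} this yields $E_K(t)\to 0$, hence $E_P(t)\to L\ge c_1>0$. An analogous decay argument applies on $\R^2$.

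The core quantitative step uses the identity $\dot E_P(t)=\int \rho\,u_2\,dx$. Writing $u_2=\partial_{x_1}\psi$, integrating by parts, and interpolating between $\|\rho\|_{L^2}$ (conserved) and the assumed bound on $\|\rho\|_{\dot H^s}$, I would estimate $|\dot E_P|$ by a product of time-integrable $u$-norms (controlled by the dissipation) and a power of $t^{\alpha_s}$. This produces a quantitative rate $\|\rho(t)-\chi_\star\|_{L^2}\to 0$. On the other hand, since $\chi_\star$ is a characteristic function (hence not in $\dot H^{1/2}$), a frequency cut-off argument gives the reverse inequality
\[
\|\rho(t)\|_{\dot H^s}\;\gtrsim\; \|\rho(t)-\chi_\star\|_{L^2}^{-(2s-1)},
\]
so $\|\rho(t)\|_{\dot H^s}$ is forced to grow at a rate that contradicts the assumed polynomial upper bound.

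The main obstacle is calibrating the interpolation so as to extract the \emph{sharp} exponents $s/10$ on $\R^2$ and $s(2s-1)/(8s-2)$ on $\mathbb T^2$. The difference between the two domains reflects the stronger dissipation available on $\mathbb T^2$ (Poincar\'e with a universal constant) versus the slower $L^2$-decay on $\R^2$, where spatial decay of the initial data must be exploited. On $\R^2$ the exponent $s/10$ can be reached by first handling $s=1$ and then applying the Fourier interpolation $\|\rho\|_{\dot H^1}^s\le \|\rho\|_{\dot H^s}\|\rho\|_{L^2}^{s-1}$. On $\mathbb T^2$, however, $s(2s-1)/(8s-2)$ is strictly larger than the $s/6$ that such simple interpolation would give for $s>1$, so the torus argument must use the $\dot H^s$-bound \emph{directly} in the estimate of $\dot E_P$, balancing a multi-parameter interpolation against the time integrability of $\|\nabla u\|_{L^2}^2$ and the quantitative Sobolev lower bound above. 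Making this balance tight is where the bulk of the technical work lies.
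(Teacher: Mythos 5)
Your proposal correctly identifies the three structural inputs (sign-preservation of $\rho$ in the upper half, boundedness and monotonicity of the total energy, and finiteness of $\nu\int_0^\infty\|\nabla u\|_{L^2}^2\,dt$), but the core of the argument contains a step that does not follow and that the paper's proof is specifically designed to avoid. You propose to bound $|\dot E_P|=|\int\rho\,u_2\,dx|$ and deduce from this a \emph{quantitative rate} for $\|\rho(t)-\chi_\star\|_{L^2}\to 0$, where $\chi_\star$ is the stratified minimizer of the potential energy. An upper bound on $|\dot E_P|$ only says that $E_P$ varies slowly; it gives no convergence of $\rho$ in $L^2$ to anything. Even granting $E_P(t)\to L$, there is no reason $L$ should equal the minimal potential energy among rearrangements of $\rho_0$ (this is open even for the related IPM problem), and sublevel sets of $E_P$ are not $L^2$-neighborhoods of $\chi_\star$, so the claimed lower bound $\|\rho\|_{\dot H^s}\gtrsim\|\rho-\chi_\star\|_{L^2}^{-(2s-1)}$ has no foothold. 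A secondary gap: the decay $E_K(t)\to 0$ on $\R^2$ is asserted by analogy with the torus, but Poincar\'e is unavailable there and the paper never establishes (or needs) such decay; likewise a strictly positive lower bound on $E_P$ is not needed, only $E_P\ge 0$.

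The missing idea is to differentiate the potential energy \emph{twice}. Using the momentum equation, the paper shows $E_P''=A+B-\delta$ with $\delta(t)=\|\partial_1\rho(t)\|_{\dot H^{-1}}^2$ and with $A,B$ controlled by the viscous dissipation and by $\|\rho\|_{\dot H^s}^{1/s}$. Integrating over $[T,2T]$ and using that $E_P'$ is uniformly bounded forces $\int_T^{2T}\delta(t)\,dt$ to stay under control; the contradiction then comes from a purely Fourier-analytic lemma (Lemma~\ref{smallinx1}) stating that for functions odd in $x_2$ with the given sign and conserved integral quantities, smallness of $\|\partial_1\mu\|_{\dot H^{-1}}$ forces largeness of $\|\mu\|_{\dot H^s}$ --- with exponents $\delta^{-s/4}$ on $\R^2$ and $\delta^{-(s-1/2)}$ on $\T^2$. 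These exponents, balanced against the $B$-term, are exactly where $s/10$ and $s(2s-1)/(8s-2)$ come from; the discrepancy between the two domains reflects the different Fourier lemmas (one uses conservation of $\|\mu\|_{L^1}$ and $\|\mu\|_{L^2}$, the other a reduction to the $k_2=1$ mode and conservation of $\int\mu^{1/3}$), not a Poincar\'e-versus-no-Poincar\'e dichotomy. Without replacing your convergence-to-$\chi_\star$ step by something of this kind, the argument does not close.
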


\begin{remark} It is a natural question whether these growth rates are sharp. While the powers are likely non-sharp, we point out that $\|\rho(t)\|_{H^1}$ cannot have exponential growth under the assumptions \textbf{\textup{(A1)}}--\textbf{\textup{(A3)}}. Namely, following the arguments similar to Kukavica--Wang \cite{KW2020}, we show in Proposition~\ref{prop_subexp} that under the assumptions \textbf{\textup{(A1)}}--\textbf{\textup{(A3)}}, $\|\rho(t)\|_{H^1}$ has a refined sub-exponential upper bound
\[
\|\rho(t)\|_{H^1(\Omega)} \lesssim \exp (Ct^{\alpha})\quad\text{ for all }t>0
\]
for some constant $\alpha\in(0,1)$. Therefore in this setting, the fastest possible growth rate of $\| \rho(t)\|_{H^1(\Omega)}$ is somewhere between algebraic and sub-exponential.

\end{remark}

The proof of Theorem~\ref{infinite_growth} is motivated by a recent result on small scale formation in solutions to incompressible porous media (IPM) equation by the first and third author \cite{KY}. The main idea there was to use the monotonicity of the potential energy $E_P(t) = \int \rho(x,t) x_2 dx$: on the one hand, for solutions with certain symmetries, $E_P(t)$ is bounded below with $E_P'(t)=-\|\partial_1\rho(t)\|_{\dot{H}^{-1}}^2$, thus the integral $\int_0^\infty \|\partial_1\rho(t)\|_{\dot{H}^{-1}}^2 dt$ is finite; on the other hand, under certain symmetries, one can show that $\|\partial_1\rho(t)\|_{\dot{H}^{-1}}^2$ can only be small if $\|\rho(t)\|_{H^s}\gg 1$ for some $s>0$, leading to growth of $\rho$ in Sobolev norms.

The IPM and Boussinesq equation are related in the sense that in both equations, the density $\rho$ is transported by an incompressible $u$, where 
$
u = -\nabla p - \rho e_2
$ in IPM,
whereas
$
\frac{Du}{Dt} = -\nabla p - \rho e_2 + \nu \Delta u
$ in Boussinesq equations. Since the velocity in Boussinesq equation has one more time derivative than IPM,  we formally expect that $E_P''(t)$ should be related to $-\|\partial_1\rho(t)\|_{\dot{H}^{-1}}^2$. While this turns out to be true, the situation is more delicate for the Boussinesq equation because $E_P''(t)$ also contains other terms coming from the pressure and viscosity terms. By carefully controlling these additional terms, we prove that if $\|\rho(t)\|_{H^s}$ grows too slowly for $s\geq 1$, $E_P'(t)$ would become unbounded below, contradicting the uniform-in-time bound of energy.

\subsection{ The inviscid case $\nu=0$.} For the inviscid Boussinesq equations in 2D, it is well-known that the system \eqref{Boussinesq} can be rewritten into an equivalent system for the density $\rho$ and the vorticity $\omega= \partial_1 u_2 -\partial_2 u_1$:
 \begin{equation}\label{Boussinesqw}
\begin{split}
&\rho_t + u \cdot \nabla \rho = 0, \\
&\omega_t + u\cdot \nabla \omega = -\partial_1 \rho, \\
\end{split}
\end{equation}
where the velocity $u$ can be recovered from the vorticity $\omega$ from the Biot--Savart law $u = \nabla^\perp (-\Delta)^{-1}\omega$.
While
local well-posedness results are available in a variety of functional spaces for $\Omega=\mathbb{R}^2, \mathbb{T}^2$ or a bounded domain \cite{CKN1999,ChaeNam1997,Danchin2013},  whether smooth initial data in $\mathbb{T}^2$ or $\mathbb{R}^2$ with finite energy can develop a finite-time singularity is an outstanding open question in fluid dynamics. Note that smooth, infinite-energy initial data can lead to a finite time blow-up, as shown by Sarria--Wu \cite{SW2015}.

In the presence of boundary, there have been many exciting developments regarding finite-time singularity formation of solutions in the past few years. 
Luo--Hou \cite{HL2014} provided numerical evidence for finite time blow up in smooth solutions of the 3D axi-symmetric Euler 
equation in a cylinder.  
When the domain has a corner, Elgindi--Jeong \cite{elgindi2020finite} proved that blow-up can happen for inviscid Boussinesq equation with smooth initial data.
When $\Omega=\mathbb{R}^2_+$ is the upper-half-plane, Chen--Hou \cite{CH2021} proved that solutions with $C^{1,\alpha}$ velocity and density can have a nearly self-similar finite-time blowup. Recently, for smooth initial data, Wang--Lai--G\'omez-Serrano--Buckmaster \cite{wang2022self} used physics-informed neural networks to construct an approximate self-similar blow-up solution numerically. In a very recent preprint, Chen--Hou \cite{ChenHou2022} put forward an argument combining impressive analytical tools and computer assisted estimates 
to show that smooth initial data can lead to a stable nearly self-similar blowup.

 Note that the inviscid Boussinesq equation \eqref{Boussinesqw} becomes the 2D Euler equation when $\rho\equiv 0$, where it is well-known that $\|\nabla\omega(t)\|_{L^\infty}$ can have infinite-in-time growth \cite{MR2461825, MR3293735,MR3245016, MR1139875, MR3276599}. Therefore we will only focus on proving infinite-in-time growth of either $\nabla \rho$ (since $\rho$ itself is preserved along the trajectory, one can at most obtain growth results for $\nabla \rho$), or $L^p$ norms of $\omega$ itself not involving any derivatives (where such growth is not possible for 2D Euler since the $\|\omega\|_{L^p}$ is preserved in time).

Our first result is set up in the periodic domain $\Omega=\mathbb{T}^2$. We show that for all smooth initial data $(\rho_0,\omega_0)$ in $\mathbb{T}^2$ under some symmetry assumptions, as long as $\rho_0$ takes values of different sign along the two line segments $\{0\}\times[0,\pi]$ and $\{\pi\}\times[0,\pi]$ (see the left figure of Figure~\ref{fig2} for an illustration), $\|\nabla\rho(t)\|_{L^\infty}$ must grow to infinity at least algebraically in time for all time during the existence of a smooth solution.

\begin{theorem}\label{invT2}
 Let $\rho_0\in C^\infty(\mathbb{T}^2)$ be odd in $x_2$ and even in $x_1$, and $\omega_0\in C^\infty(\mathbb{T}^2)$ be odd in both $x_1$ and $x_2$. Assume $\rho_0\geq 0$ on $\{0\}\times[0,\pi]$ with $k_0:= \sup_{x_2\in [0,\pi]}\rho_0(0,x_2)>0$, and $\rho_0\leq 0$ on $\{\pi\}\times[0,\pi]$.  Then there exists some constant $ c(\rho_0,\omega_0)>0$, such that the corresponding solution $(\rho,\omega)$ to \eqref{Boussinesqw} satisfies
\begin{equation}\label{rholbiT2}
\sup_{\tau\in[0,t]}\|\nabla \rho (\tau)\|_{L^\infty(\mathbb{T}^2)} > c(\rho_0,\omega_0)  t^{1/2} \quad\text{ for all }t\in[0,T),
\end{equation}  
where  $T$ is the lifespan of the smooth solution $(\rho,\omega)$.

\end{theorem}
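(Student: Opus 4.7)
The plan is to combine the invariant-line structure forced by the parity assumptions with a monotonicity estimate for an integrated vorticity, and then close up using energy conservation and Stokes' theorem.

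\textbf{Step 1 (Symmetries and invariance).} I would first check that the parities of $\rho_0$ and $\omega_0$ are propagated by~\eqref{Boussinesqw}. This forces $u_1$ to be odd in $x_1$ and even in $x_2$, and $u_2$ to be even in $x_1$ and odd in $x_2$. In particular $u_1\equiv 0$ on $\{x_1\in\{0,\pi\}\}$ and $u_2\equiv 0$ on $\{x_2\in\{0,\pi\}\}$, so the boundary of $D:=[0,\pi]^2$ is flow-invariant and $\rho$ is merely advected vertically along each vertical invariant line. The sign assumptions are preserved: $\rho(0,\cdot,t)\ge 0$ and $\rho(\pi,\cdot,t)\le 0$ on $[0,\pi]$ for all $t\in[0,T)$, with the maximum $k_0=\sup_{x_2\in[0,\pi]}\rho_0(0,x_2)$ attained at some interior point $y^*(t)\in(0,\pi)$ (interior since $\rho(0,0)=\rho(0,\pi)=0$ by the odd symmetry in $x_2$ and periodicity).

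\textbf{Step 2 (Monotone vorticity integral).} Introduce $M(t):=\int_D\omega\,dx$. Since $u\cdot n\equiv 0$ on $\partial D$, the vorticity equation gives
\[
\dot M(t)=-\int_D\partial_1\rho\,dx=\int_0^\pi\bigl[\rho(0,x_2,t)-\rho(\pi,x_2,t)\bigr]\,dx_2\ge 0.
\]
Setting $K:=\sup_{\tau\in[0,t]}\|\nabla\rho(\tau)\|_{L^\infty}$, the $K$-Lipschitz bound and $\rho(0,y^*(\tau),\tau)=k_0$ give $\rho(0,\cdot,\tau)\ge k_0/2$ on an interval of length $\gtrsim k_0/K$ around $y^*(\tau)$, on which simultaneously $\rho(\pi,\cdot,\tau)\le 0$. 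Hence $\dot M(\tau)\gtrsim k_0^2/K$, and consequently $M(t)\gtrsim t/K$ for large $t$.

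\textbf{Step 3 (Upper bound on $M$).} By Stokes, $M(t)=\oint_{\partial D}u\cdot dl$ is a trace integral of $u$ around $\partial D$. Using the trace interpolation $\|u(\cdot,y)\|_{L^2(\mathbb{T})}\lesssim\|u\|_{L^2}^{1/2}\|u\|_{H^1}^{1/2}$, the uniform bound $\|u(t)\|_{L^2}\le C_0$ coming from conservation of $E_K+E_P$ together with $|E_P|\le\pi\|\rho_0\|_{L^1}$, and $\|u\|_{H^1}\lesssim\|\omega\|_{L^2}$, one obtains $|M(t)|\lesssim\|\omega(t)\|_{L^2}^{1/2}$. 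The vorticity equation bounds $\|\omega(t)\|_{L^2}\le\|\omega_0\|_{L^2}+\int_0^t\|\partial_1\rho(s)\|_{L^2}\,ds\lesssim 1+tK$ via the Lipschitz control on $[0,t]$. Combining,
\[
t/K\;\lesssim\;(1+tK)^{1/2},
\]
which after algebraic manipulation should be pushed to $K\gtrsim t^{1/2}$.

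\textbf{Main obstacle.} A direct combination of the three steps only yields $K\gtrsim t^{1/3}$, so the delicate point is squeezing out the full $t^{1/2}$ exponent. This is likely achieved by either (i) sharpening the upper bound on $M(t)$ using the fact that the stream function $\psi$ vanishes identically on $\partial D$ (since $\psi$ inherits the odd parities of $\omega$), which gives Poincar\'e-type gains on the trace of $u$ along the boundary; or (ii) improving the bound on $\|\omega(t)\|_{L^2}$ by exploiting conservation of $\|\rho\|_{L^2}$ and the baroclinic structure (in the spirit of the potential energy identity $\ddot E_P=R-\|\partial_1\rho\|_{\dot H^{-1}}^2$ used in the viscous case). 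Managing the feedback between $\omega$-growth driven by baroclinic torque and the circulation lower bound coming from the Lipschitz argument is the central technical difficulty.
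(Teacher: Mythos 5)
Your Steps 1 and 2 match the paper's argument: the same invariant-segment/symmetry analysis, the same monotone circulation $M(t)=\int_Q\omega\,dx$ with $\dot M\gtrsim k_0^2/\sup_{[0,t]}\|\nabla\rho\|_{L^\infty}$ (the paper phrases this by tracking two Lagrangian points on $\{0\}\times(0,\pi)$ carrying the values $k_0$ and $k_0/2$, but it is the same estimate). The gap is in Step 3, and you have correctly diagnosed that your closing inequality only yields $t^{1/3}$. However, neither of your two proposed fixes is the mechanism that actually closes the argument. The missing ingredient is the paper's Lemma 3.1: for a velocity field on the square $Q$ with kinetic energy $\int_Q|u|^2\le E_0$ and circulation $A=\int_Q\omega\,dx$, one has the \emph{superlinear} lower bound $\|\omega\|_{L^\infty(Q)}\ge c_0E_0^{-1}|A|^3$. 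The proof of that lemma is elementary but is the real idea: if $\int_{\partial Q}|u|\,ds\ge A$ on the boundary and the energy is bounded, then the concentric square $Q_{r}$ on which the analogous line integral first drops to $A/2$ satisfies $r_0\lesssim E_0A^{-2}$ (integrate the Cauchy--Schwarz bound $\int_{\partial Q_r}|u|^2\,ds\gtrsim A^2$ transversally); by Green's theorem $\int_{Q\setminus Q_{r_0}}\omega\ge A/2$ over a region of area $\lesssim E_0A^{-2}$, forcing $\|\omega\|_{L^\infty}\gtrsim A^3/E_0$. Combining this cubic bound with the Lagrangian upper bound $\|\omega(t)\|_{L^\infty}\le\int_0^t\|\nabla\rho(\tau)\|_{L^\infty}\,d\tau+\|\omega_0\|_{L^\infty}$ and $A(t)\gtrsim\int_0^t\|\nabla\rho(\tau)\|_{L^\infty}^{-1}\,d\tau\ge t^2/F(t)$ gives $F(t)\gtrsim(t^2/F(t))^3$, hence $F(t)\gtrsim t^{3/2}$ and $\sup_{[0,t]}\|\nabla\rho\|_{L^\infty}\ge F(t)/t\gtrsim t^{1/2}$.

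Concerning your two suggested repairs: (i) the vanishing of the stream function on $\partial Q$ does not obviously improve your trace bound --- the estimate $|M|\lesssim\|u\|_{L^2}^{1/2}\|\omega\|_{L^2}^{1/2}$ is already what interpolation gives, and staying at the level of $\|\omega\|_{L^2}$ caps you at $t^{1/3}$ even with the optimal $L^2$ version of the circulation-energy lemma ($\|\omega\|_{L^2}\gtrsim E_0^{-1/2}A^2$); (ii) the baroclinic/potential-energy identity is not used in the inviscid proof at all. The decisive points are (a) working with $\|\omega\|_{L^\infty}$ rather than $\|\omega\|_{L^2}$ on both sides, and (b) the cubic power $|A|^3$ from the energy-concentration argument. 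Without Lemma 3.1 (or an equivalent), the exponent $1/2$ is not reachable along your route.
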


\begin{figure}[hbp]
\hspace{0.3cm}\includegraphics[scale=0.9]{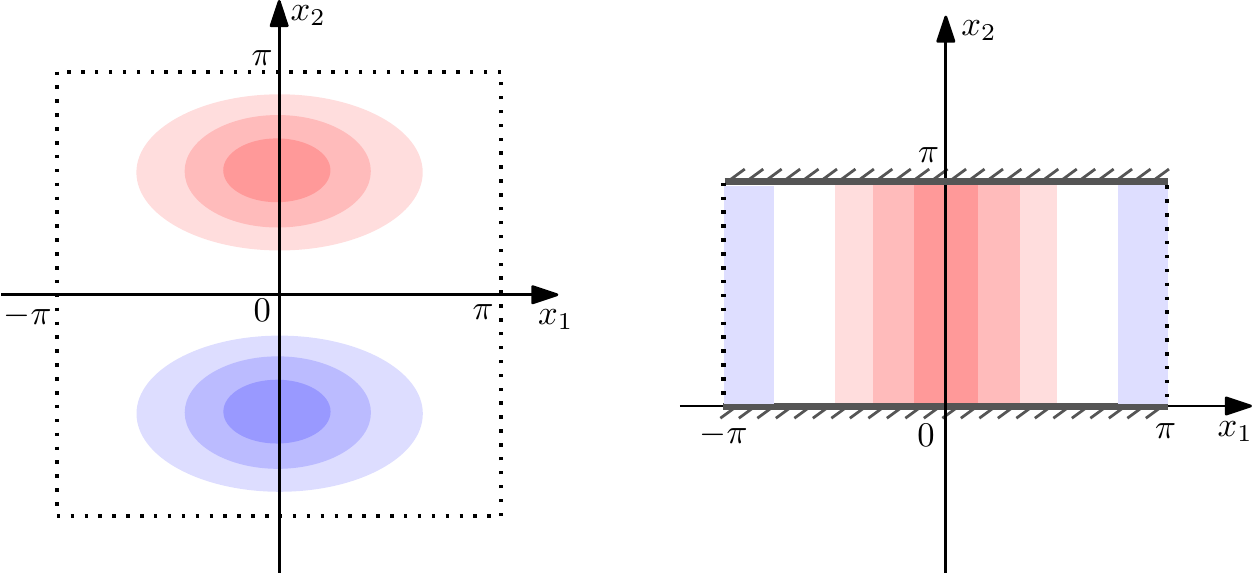}
\caption{Illustration of  the symmetry and sign assumptions on $\rho_0$ in the torus $\mathbb{T}^2$ (left) and the strip $\mathbb{T}\times[0,\pi]$ (right) for the inviscid Boussinesq equation. Here red color denotes positive $\rho_0$, and blue color denotes negative $\rho_0$. \label{fig2}}
\end{figure}

Next we consider the inviscid Boussinesq equation in the strip $\mathbb{T}\times[0,\pi]$. Here the presence of boundary allows us to obtain a faster growth rate in $\|\nabla\rho(t)\|_{L^\infty}$: we prove that the growth is at least like $t^2$ in the strip (as compared to $t^{1/2}$ in Theorem~\ref{invT2}). We are also able to obtain a superlinear lower bound for $\|\omega(t)\|_{L^p}$ (for $p=\infty$ it grows like $t^3$) and a linear lower bound for $\|u(t)\|_{L^\infty}$.
Although these algebraic lower bounds are far from finite-time blow up, they hold for a broad class of initial data: no assumption on $\omega_0$ is needed other than being odd in $x_1$, and $\rho_0$ only needs to be even in $x_1$ and satisfy some sign conditions along two line segments (see the right figure of Figure~\ref{fig2} for an illustration). The proofs are soft but might provide an insight into the behavior of smooth solutions during their lifespan.

\begin{theorem}\label{invB}
Let $\Omega = \T \times [0,\pi].$ Let $\rho_0\in C^\infty(\Omega)$ be even in $x_1$, and $\omega_0\in C^\infty(\Omega)$ be odd in $x_1$. Assume that there exists $k_0>0$ such that $\rho_0\geq k_0>0$ on $\{0\}\times[0,\pi]$, and $\rho_0\leq 0$ on $\{\pi\}\times[0,\pi]$. Then there exist some constants $T_0(\rho_0,\omega_0)\geq 0$ and $c(\rho_0,\omega_0)>0$, such that the corresponding solution $(\rho,\omega)$ to \eqref{Boussinesqw} satisfies
\begin{equation}\label{omlbiB}
\|\omega(t)\|_{L^p(\Omega)} \geq c t^{3-\frac2p} \quad\text{ for all }p\in [1,\infty], t\in[T_0,T),
\end{equation}
\begin{equation}\label{u_growth}
\|u(t)\|_{L^\infty(\Omega)} \geq c t \quad\text{ for all }t\in[T_0,T),
\end{equation}
and
\begin{equation}\label{rholbiB}
\sup_{\tau\in[0,t]}\|\nabla\rho (\tau)\|_{L^\infty(\Omega)} > c t^2 \quad\text{ for all } t\in[0,T),
\end{equation}
where  $T$ is the lifespan of the smooth solution $(\rho,\omega)$.
In particular, if $\int_{[0,\pi]\times[0,\pi]}\omega_0 dx \geq 0$, then $T_0=0$ in all the estimates above. 
\end{theorem}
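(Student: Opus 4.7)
The plan is to prove the three bounds of Theorem~\ref{invB} sequentially: $\|\omega(t)\|_{L^1}\gtrsim t$ via tracking $\int_D\omega$, $\|u(t)\|_{L^\infty}\gtrsim t$ via Stokes' theorem, $\sup_{\tau\le t}\|\nabla\rho(\tau)\|_{L^\infty}\gtrsim t^2$ via a transport/concentration argument, and finally $\|\omega(t)\|_{L^p}\gtrsim t^{3-2/p}$ via the Lagrangian vorticity identity coupled with concentration.

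I first verify the persistence of symmetries and sign conditions. The evenness of $\rho$ and oddness of $\omega$ in $x_1$ are compatible with \eqref{Boussinesqw}; by Biot--Savart, $u_1$ is odd and $u_2$ is even in $x_1$, so $u_1\equiv 0$ on the vertical lines $\{x_1=0\}\cup\{x_1=\pi\}$. Combined with $u_2|_{x_2\in\{0,\pi\}}=0$, this yields $u\cdot n|_{\partial D}=0$ for $D:=[0,\pi]^2$ and makes each vertical side of $D$ flow-invariant; since the restricted flow is a diffeomorphism of $[0,\pi]$ fixing its endpoints, the bounds $\rho(0,\cdot,t)\ge k_0$ and $\rho(\pi,\cdot,t)\le 0$ persist on $[0,T)$. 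Setting $I(t):=\int_D\omega(x,t)\,dx$, the vorticity equation and the vanishing boundary flux give
$$I'(t)=-\int_D\partial_1\rho\,dx=\int_0^\pi\bigl(\rho(0,x_2,t)-\rho(\pi,x_2,t)\bigr)\,dx_2\ge k_0\pi,$$
so $I(t)\ge I(0)+k_0\pi t\gtrsim t$ for $t\ge T_0:=\max\{0,-I(0)/(k_0\pi)\}$ (with $T_0=0$ when $\int_D\omega_0\ge 0$), proving the $p=1$ case of \eqref{omlbiB}. Applying Stokes' theorem $I(t)=\oint_{\partial D}u\cdot\tau\,d\sigma$ to the four edges of length $\pi$ shows that some component of $u$ attains size $\gtrsim t$ on $\partial D$, which is \eqref{u_growth}.

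For the quadratic bound \eqref{rholbiB}, I argue by contradiction: suppose $\|\nabla\rho(\tau)\|_{L^\infty}\le\eta\tau^2$ throughout $[0,t]$ with $\eta$ small. Then the boundary sign conditions combined with the gradient bound force $\{\rho(\cdot,\tau)\le k_0/4\}$ to lie outside an $x_1$-strip of width $\gtrsim 1/(\eta\tau^2)$ adjacent to $\{x_1=0\}$, and symmetrically $\{\rho(\cdot,\tau)\ge 3k_0/4\}$ outside a strip near $\{x_1=\pi\}$. Combining this with the energy identity $\|u\|_{L^2}^2+2\int\rho x_2\,dx=\text{const}$ (which bounds $\|u\|_{L^2}$ uniformly and, together with the linear pointwise growth of $\|u\|_{L^\infty}$, localizes the fast-flowing region to area $\lesssim t^{-2}$) should yield a contradiction via a Lagrangian argument displacing a finite-area parcel of intermediate density into a forbidden zone. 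I expect this Step to be the main obstacle: a naive distance argument between the two vertical boundary sides fails because they are flow-invariant with constant separation $\pi$, so the contradiction must emerge from a refined coupling of velocity concentration, incompressibility, and level-set transport rather than from a direct comparison of level sets.

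Once the previous step supplies a point $(x^*,\tau^*)$ with $|\partial_1\rho(x^*,\tau^*)|\gtrsim t^2$ for some $\tau^*\le t$, located in a strip of width $\sim t^{-2}$ by the Step~3 geometry, the Lagrangian identity
$$\omega(X(t;x_0),t)=\omega_0(x_0)-\int_0^t\partial_1\rho(X(s;x_0),s)\,ds$$
applied along a trajectory through $x^*$ on which $\partial_1\rho$ maintains a definite sign of order $s^2$ yields $|\omega(t)|\gtrsim t^3$ there, giving the $p=\infty$ case of \eqref{omlbiB}. The concentration on a spatial region of area $\sim t^{-2}$ then produces $\|\omega(t)\|_{L^p}^p\gtrsim t^{3p}\cdot t^{-2}=t^{3p-2}$ for every $p\in(1,\infty)$, completing \eqref{omlbiB}.
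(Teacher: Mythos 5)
Your first two steps (persistence of symmetries, the monotonicity $I'(t)\ge k_0\pi$ giving $I(t)\gtrsim t$, and Stokes' theorem giving $\|u(t)\|_{L^\infty}\gtrsim t$) match the paper's argument exactly and are correct. But the remaining two bounds --- which are the substance of the theorem --- are not established, and your logical order is the reverse of the one that works. The paper does \emph{not} prove the $\nabla\rho$ bound first and then deduce vorticity growth; it proves $\|\omega(t)\|_{L^p}\gtrsim t^{3-2/p}$ directly from $I(t)\gtrsim t$ via a separate, purely kinematic lemma (Lemma~\ref{omega_lp}): for any smooth field on $Q=[0,\pi]^2$ with $\int_Q|u|^2\le E_0$ and $A=\int_Q\omega\,dx$, one has $\|\omega\|_{L^p(Q)}\ge c_0E_0^{-1+1/p}|A|^{3-2/p}$. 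The mechanism is that the circulation $\int_{\partial Q_r}u\cdot dl$ must exceed $A/2$ on all shells $Q_r=[r,\pi-r]^2$ with $r<r_0$, and the Cauchy--Schwarz inequality on each shell together with the energy bound forces $r_0\lesssim E_0A^{-2}$; hence $\int_{Q\setminus Q_{r_0}}\omega\ge A/2$ is concentrated on a region of area $\lesssim E_0A^{-2}$, and H\"older's inequality gives the $L^p$ lower bound. This single observation, combined with the uniform-in-time bound on the kinetic energy (from conservation of $E_K+E_P$ and conservation of $\|\rho\|_{L^1}$), yields \eqref{omlbiB} for all $p$. The bound \eqref{rholbiB} is then a one-line corollary: the Lagrangian identity $\frac{d}{dt}\omega(\Phi_t(x),t)=-\partial_1\rho(\Phi_t(x),t)$ gives $\|\omega(t)\|_{L^\infty}\le\int_0^t\|\nabla\rho(\tau)\|_{L^\infty}d\tau+\|\omega_0\|_{L^\infty}$, so $\sup_{\tau\le t}\|\nabla\rho(\tau)\|_{L^\infty}\ge t^{-1}(ct^3-\|\omega_0\|_{L^\infty})\gtrsim t^2$.

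Your proposed route for \eqref{rholbiB} --- a contradiction argument coupling a level-set/strip geometry with velocity concentration --- is one you yourself flag as "the main obstacle," and it remains a genuine gap: you do not produce the contradiction, and the subsequent derivation of the $p>1$ cases of \eqref{omlbiB} depends on it. Even granting that step, the final argument has two further unsupported claims: that $\partial_1\rho$ maintains a definite sign of size $s^2$ along a single trajectory (the time integral $\int_0^t\partial_1\rho(X(s),s)\,ds$ could oscillate and cancel, so pointwise largeness of $|\partial_1\rho|$ at one time does not accumulate into $|\omega|\gtrsim t^3$), and that the large vorticity is concentrated on a set of area $\sim t^{-2}$ (this is exactly the content of the paper's Lemma~\ref{omega_lp}, which you would need to prove rather than assume). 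I recommend reorganizing around that lemma: prove the energy bound, prove the concentration estimate for fields of bounded energy and large circulation, and let both \eqref{omlbiB} and \eqref{rholbiB} fall out of it.
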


\begin{remark}
 In the estimates for $\|\omega(t)\|_{L^p(\Omega)}$ and $\|u(t)\|_{L^\infty(\Omega)}$ above, it is necessary to have a ``waiting time'' $T_0$ depending on the initial data. This is because for any $t_1>0$, there exists some initial data satisfying the assumption of Theorem~\ref{invB} with $\omega(\cdot,t_1)\equiv 0$. (To see this, one can start with $\omega(\cdot,t_1)\equiv 0$ and go backwards in time). That being said, it can be easily seen from the proof that if $\int_{[0,\pi]\times[0,\pi]}\omega_0 dx \geq 0$, no waiting time is needed.
\end{remark}

\begin{remark}
 If the symmetry assumptions on $\rho_0$ and $\omega_0$ are dropped, we still have $\|\omega(t)\|_{L^1(\Omega)}\gtrsim t$ for $t\gg 1$. This infinite-in-time growth implies that given any steady state $\omega_s$ for the 2D Euler equation on the strip, we have $(0,\omega_s)$ is a nonlinearly unstable steady state for the inviscid Boussinesq equation. See Remark~\ref{rmk_stable} for more discussions.
\end{remark}

For both Theorems~\ref{invT2} and \ref{invB},  the proof is based on an interplay between various monotone and conservative quantities. Under the symmetry assumptions, one can easily check that the sign assumptions  $\rho \geq 0$ on $\{0\}\times[0,\pi]$ and $\rho\leq 0$ on $\{\pi\}\times[0,\pi]$ remain true for all times. This allows us to make the elementary but important observation that the vorticity integral $\int_{[0,\pi]\times[0,\pi]}\omega(x,t)dx$ is monotone increasing for all times. More precisely, for the strip the growth is linear for all times during the existence of a smooth solution, whereas in $\mathbb{T}^2$ we relate the growth with $\|\nabla\rho(t)\|_{L^\infty}$. Another key ingredient is the relation between vorticity integral and kinetic energy: since the kinetic energy has a uniform-in-time bound, we prove that if the vorticity integral is large, the $L^p$ norm of vorticity must be much larger. For a strip, this allows us to upgrade the linear growth of $\|\omega(t)\|_{L^1}$ into superlinear growth for  $\|\omega(t)\|_{L^p}$ for $p\in(1,\infty]$.

\subsection{Infinite-in-time growth for the 3D axisymmetric Euler equation.}
The question whether incompressible Euler equation in $\mathbb{R}^3$ can have a finite-time blow-up from smooth initial data of finite energy is an outstanding open problem in
nonlinear PDE and fluid dynamics. As we mentioned earlier, for the 3D axisymmetric Euler equation, when the equation is set up in a cylinder with boundary, Luo--Hou \cite{HL2014} gave convincing numerical evidence that smooth initial data can lead to a finite-time singularity formation on the boundary. Recent numerical evidence by Hou--Huang \cite{hou2021potential, hou2022potential} and Hou \cite{hou2022b} suggests that the blow-up can also happen in the interior of domain, but apparently not in self-similar fashion. The first rigorous  blow-up result for finite energy solutions was established in domains with corners by Elgindi--Jeong \cite{elgindi2019finite}.
For initial data in $C^{1,\alpha}$ in $\mathbb{R}^3$, Elgindi showed \cite{Elgindi2021} that such initial data can lead to a self-similar blow-up. Very recently, using the connection between 3D axisymmetric Euler and Boussinesq equations, 
Chen--Hou \cite{ChenHou2022} set up a computer assisted argument that smooth solutions to 3D axi-symmetric Euler equation can form a stable nearly self-similar blowup. 
The singularity formation happens for initial data in a small neighborhood of a profile that is selected carefully with computer assistance.   

In addition to the blow-up v.s. global-in-time regularity question, it is also interesting to investigate whether Sobolev norms  of solutions to the 3D Euler equation can have infinite-in-time growth for broader classes of initial data. Choi--Jeong \cite{choi2021filamentation} constructed smooth compactly supported initial data in $\mathbb{R}^3$ with $\|\nabla^2\omega(t)\|_{L^\infty}$ growing algebraically for all times, and $\|\omega(t)\|_{L^\infty}$ growing exponentially for finite (but arbitrarily long) time. It is also well-known that the ``two-and-a-half dimensional'' solutions (i.e. where $u$ only depends on $x,y$, not $z$) can lead to infinite-in-time linear growth of $\omega$; see Bardos--Titi \cite[Remark 3.1]{bardos2007euler} for example. See the excellent survey by Drivas--Elgindi \cite{drivas2022singularity} for more results on growth and singularity formation for 2D and 3D Euler equations.

 It is well-known that away from the axis of symmetry, the 3D axisymmetric Euler equation is closely related to the inviscid 2D Boussinesq equations  (see \cite[Section 5.4.1]{Majda-Bertozzi:vorticity-incompressible-flow}). To see this connection, recall that the 3D axi-symmetric Euler equation can be reduced to the system 
\begin{equation}\label{3dE}
\begin{split}
&D_t (r u^\theta) =0, \\
&D_t \left( \frac{\omega^\theta}{r} \right) = \frac{ \partial_z (r u^\theta )^2 }{r^4},
\end{split}
\end{equation} 
where $u^\theta$ and $\omega^\theta$ only depend on $r,z,t$, and $D_t := \partial_t + u^r \partial_r + u^z \partial_z$ is the material derivative. Heuristically speaking, $ru^\theta$ plays the role of $\rho$ in Boussinesq equation, whereas $\frac{\omega^\theta}{r}$ plays the role of $\omega$ in the Boussinesq equation. Here $(u^r, u^z)$ can be recovered from $\omega^\theta/r$ by the Biot-Savart law
\begin{equation}\label{bs_euler}
(u^r, u^z) = \frac1r \big( -\!\partial_z \psi, \,\partial_r \psi \big), \quad\text{ where }
-\frac1r \partial_r \left( \frac1r \partial_r \psi \right) - \frac{1}{r^2} \partial_z^2 \psi = \frac{\omega^\theta}{r}.   
\end{equation}

We note that the analog of Theorem \ref{invB} holds for the 3D axi-symmetric Euler equation. We set the spatial domain to be a (not rotating) Taylor--Couette tank 
\begin{equation}\label{cylinder}
\Omega = \{ (r,\theta,z): r \in [\pi,2\pi] , \theta\in \T ,z\in \T \}
\end{equation} with no-penetration boundary condition at $r=\pi,2\pi$ and periodic boundary conditions in $z$. Our assumptions and results are as follows:

\begin{figure}[htbp]
\hspace{0.3cm}\includegraphics[scale=0.95]{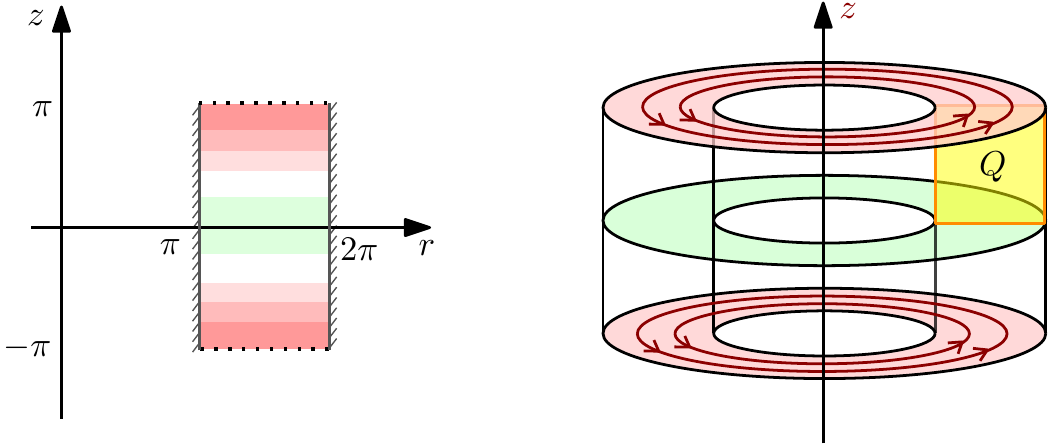}
\caption{Illustration of  the domain and assumptions on $u^\theta_0$ for the 3D axisymmetric Euler equation. The left figure illustrates $u^\theta_0$ on the $rz$ plane, and the right figure shows the 3D setting. Here red color denotes positive $u^\theta_0$ (and deeper color means larger magnitude), and green color denotes $u^\theta_0$ with a smaller magnitude (whose sign can be positive or negative). With such initial data, we will show that the ``secondary flow'' within the yellow square $Q$ grows to infinity as $t\to\infty$. \label{fig3}}
\end{figure}

\begin{theorem}\label{3dEth}
Consider the 3D axisymmetric Euler equation \eqref{3dE}--\eqref{bs_euler} set on the domain $\Omega$ in \eqref{cylinder}.  Let $u^\theta_0\in C^\infty(\Omega)$ be even in $z$, and $\omega^\theta_0\in C^\infty(\Omega)$ be odd in $z$. Assume that there exists $k_0>0$ such that $u^\theta_0\geq k_0>0$ on $z=\pi$, and $|u^\theta_0|\leq \frac{1}{8}k_0$ on $z=0$.
 Then there exist some constants $T_0(u_0)\geq 0$ and $c(u_0)>0 $, such that the corresponding solution satisfies
\begin{equation}\label{omlbiE}
\|\omega^\theta(t)\|_{L^p(\Omega)} \geq c t^{3-\frac{2}{p}} \quad\text{ for all }p\in[1,\infty], t\in[T_0,T)
\end{equation}
and
\begin{equation}\label{euler_u_bd}
\|u(t)\|_{L^\infty(\Omega)}\geq c t \quad\text{ for all } t\in[T_0,T),
\end{equation}
where  $T$ is the lifespan of the smooth solution. In particular, if $\int_0^\pi \int_{\pi}^{2\pi} \omega_0^\theta drdz \geq 0$, then $T_0=0$ in both estimates above. 
\end{theorem}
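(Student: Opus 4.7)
The plan is to mirror the proof of Theorem~\ref{invB}, exploiting the well-known analogy between the 3D axi-symmetric Euler equation \eqref{3dE}--\eqref{bs_euler} and the inviscid 2D Boussinesq system, with $ru^\theta$ playing the role of $\rho$ and $\omega^\theta/r$ playing the role of $\omega$. The working region for the vorticity analysis is the meridional rectangle $Q := [0,\pi]\times[\pi,2\pi]$ in $(z,r)$-coordinates.

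First I propagate the initial symmetry and sign structure. Since $\omega^\theta_0$ is odd in $z$, the elliptic problem \eqref{bs_euler} with the no-penetration conditions at $r=\pi, 2\pi$ forces the stream function $\psi$ to be odd in $z$; hence $u^r=-\partial_z\psi/r$ is even in $z$ and $u^z=\partial_r\psi/r$ is odd in $z$. Combined with periodicity in $z$, the oddness of $u^z$ yields $u^z\equiv 0$ on $\{z=0\}\cup\{z=\pi\}$, so the meridional trajectories originating on these two horizontal lines remain there and only move in $r$. Because $(ru^\theta)^2$ is even in $z$, the source $\partial_z(ru^\theta)^2/r^4$ is odd in $z$, preserving the oddness of $\omega^\theta$. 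Since $ru^\theta$ is transported by the meridional flow and $r\in[\pi,2\pi]$, the assumptions $u^\theta_0\ge k_0$ on $\{z=\pi\}$ and $|u^\theta_0|\le k_0/8$ on $\{z=0\}$ propagate to $ru^\theta(r,\pi,t)\ge \pi k_0$ and $|ru^\theta(r,0,t)|\le \pi k_0/4$ for all $r\in[\pi,2\pi]$ and $t\in[0,T)$, producing the pointwise gap $(ru^\theta)^2|_{z=\pi}-(ru^\theta)^2|_{z=0}\ge 15\pi^2 k_0^2/16$.

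Next I derive linear growth of the vorticity integral $I(t):=\int_Q\omega^\theta\, dr\, dz$. Rewriting $D_t(\omega^\theta/r)=\partial_z(ru^\theta)^2/r^4$ as
\[
\partial_t\omega^\theta + u^r\partial_r\omega^\theta + u^z\partial_z\omega^\theta = \frac{u^r}{r}\omega^\theta + \frac{\partial_z(ru^\theta)^2}{r^3},
\]
integrating over $Q$ against $dr\,dz$, and using the 3D incompressibility identity $\partial_r u^r + u^r/r + \partial_z u^z=0$ together with the boundary vanishings $u^r|_{r=\pi,2\pi}=0$ and $u^z|_{z=0,\pi}=0$, an integration by parts shows that the advective terms contribute exactly $-\int_Q (u^r/r)\omega^\theta\, dr\,dz$, cancelling the vortex-stretching term. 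Thus
\[
\frac{d}{dt}I(t)=\int_\pi^{2\pi}\frac{(ru^\theta)^2|_{z=\pi}-(ru^\theta)^2|_{z=0}}{r^3}\,dr \;\ge\; \frac{15\pi^2 k_0^2}{16}\int_\pi^{2\pi}\frac{dr}{r^3}=:c_0>0,
\]
so $I(t)\ge I(0)+c_0 t$. Setting $T_0:=2|I(0)|/c_0$ gives $I(t)\ge \tfrac{c_0}{2} t$ for $t\ge T_0$, and $T_0=0$ whenever $I(0)\ge 0$, which matches the final claim of the theorem.

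To conclude, observe that $\omega^\theta = \partial_z u^r-\partial_r u^z$ is the scalar curl of $(u^r,u^z)$ in the $(r,z)$-plane, so Stokes' theorem yields
\[
I(t)=-\oint_{\partial Q}\!(u^r\,dr+u^z\,dz),\qquad |I(t)|\le |\partial Q|\cdot\|u\|_{L^\infty(\Omega)},
\]
which together with the previous step gives $\|u\|_{L^\infty(\Omega)}\gtrsim t$, proving \eqref{euler_u_bd}. For the $L^p$-growth of $\omega^\theta$, the kinetic energy $\|u\|_{L^2(\Omega)}^2$ is conserved along smooth 3D Euler flows; combining this uniform $L^2$-bound with the linear lower bound on $I(t)$ through the interpolation/Biot--Savart argument from the proof of Theorem~\ref{invB} upgrades the linear $L^1$-growth of $\omega^\theta$ into $\|\omega^\theta\|_{L^p(\Omega)}\gtrsim t^{3-2/p}$ for all $p\in[1,\infty]$, yielding \eqref{omlbiE}. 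The main obstacle I anticipate is checking that the weighted $1/r$-factors in the axi-symmetric Biot--Savart operator \eqref{bs_euler} do not degrade the sharp exponent $3-2/p$; since $r$ is bounded between two positive constants on $\Omega$, these weights are comparable to their flat-strip counterparts and the estimates should transfer without loss of rate.
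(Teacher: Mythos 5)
Your proposal is correct and follows essentially the same route as the paper's proof: propagate the even/odd symmetries to get $u^z=0$ on $z=0,\pi$, use conservation of $ru^\theta$ along trajectories confined to those lines to produce the pointwise gap in $(u^\theta)^2$, deduce linear growth of $\int_Q\omega^\theta\,dr\,dz$ (your cancellation of the vortex-stretching term against the advection term is just the non-conservative form of the paper's computation with the divergence-free field $(ru^r,ru^z)$), and then combine kinetic-energy conservation with Lemma~\ref{omega_lp} and Green's theorem, noting as the paper does that $r\in[\pi,2\pi]$ keeps all weights harmless.
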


See Figure~\ref{fig3} for an illustration of the domain and initial data. Note that our setting is almost the same as the Hou--Luo scenario \cite{HL2014}, except that we replace the cylinder by an annular cylinder. While our growth estimates are far from a finite-time blow-up,
they hold for a broad class of initial data: in addition to some symmetry assumptions on $u^\theta_0$ and $\omega^\theta_0$, all we need is $u^\theta_0$ being uniformly positive on $z=\pi$, and having small magnitude on $z=0$. The proof is a simple argument analogous to Theorem~\ref{invB} for Boussinesq equations, where the key idea is the interplay between the monotonicity of an vorticity integral and the boundedness of kinetic energy.

\subsection*{Acknowledgements}
AK  was partially supported the NSF-DMS grant 2006372.
YY was partially supported by the NUS startup grant A-0008382-00-00 and MOE Tier 1 grant A-0008491-00-00.

\section{Small scale formation for viscous Boussinesq equation}

\label{sec_prelim}

 In this section, we aim to prove Theorem~\ref{infinite_growth}. To begin with, we discuss some properties on the solution $(\rho,u)$ when the initial data satisfies  \textbf{(A1)}--\textbf{(A3)}. Under the assumption \textbf{(A1)}, it is well-known  that $\rho(\cdot, t)$ and $u(\cdot,t)$ remain in $C^\infty(\Omega)$. And if $\Omega = \mathbb{R}^2$, we have $\rho(\cdot, t) \in C^\infty_c(\R^2)$, and $u(\cdot, t)\in H^k(\R^2)$ for all $k\in \N$ and $t\geq 0$ (see e.g. \cite{HouLi2005,Chae2006}).

    Note that the symmetry in  \textbf{(A2)} holds true for all times thanks to the uniqueness of solutions. If $\Omega=\T^2$, the additional symmetry in $x_1$ leads to $u_1(\cdot,t)=0$ on the $x_2$-axis for all times, thus $\rho(0,x_2,t)=0$ for all $x_2 \in \T$ and $t\geq 0$.

 The symmetry in $x_2$ in  \textbf{(A2)} also gives $u_2(\cdot,t)=0$  on the $x_1$-axis for all times, and combining it with \textbf{(A3)} gives  $\rho(x,t) \ge0$ for $x_2 \ge 0$ and all $t\geq 0$.

 We also point out that due to the incompressibility of $u$, all $L^p$ norms of $\rho$ are conserved in time, that is,
 \begin{equation}\label{lp_conservation}
 \rVert \rho(t,\cdot) \rVert_{L^p(\Omega)} = \rVert \rho_0 \rVert_{L^p(\Omega)}, \text{ for all } t\ge0,\ p\in [1,\infty].
 \end{equation}

\subsection{Evolution of the potential and kinetic energy}\label{sec21}
 Let us define the \emph{potential energy} and \emph{kinetic energy} of the solution as
 \begin{equation}\label{def_E} E_P(t) := \int_{\Omega} \rho(x,t)x_2 \,dx, \quad E_K(t):= \frac{1}{2}\int_{\Omega}|u(x,t)|^2\,dx.
 \end{equation}
 As we will see, the evolution of these energies plays a crucial role in the proof of Theorem~\ref{infinite_growth}.
 The rate of change of $E_P$ can be easily computed as
 \begin{equation}\label{derivative_potential}
 E_P'(t) = \int_\Omega \rho_t x_2\,dx = \int_\Omega -u\cdot (\nabla \rho) x_2\, dx = \int_{\Omega}\rho u_2\, dx,
 \end{equation}
 where the last equality follows from the divergence theorem and $\nabla \cdot u=0$, and note that the boundary integral in the divergence theorem is zero: in $\R^2$ it follows from $\rho(\cdot,t)$ having compact support, and in $\T^2$ it follows from the symmetries in \textbf{(A2)}.

 Similarly, one can compute the rate of change of the kinetic energy $E_K$ as \[
 E_K'(t) = -\int_\Omega {\rho u_2} dx - \nu \int_\Omega |\nabla  u |^2 dx.
 \]
 Combining the two equations, the total energy $E_P(t)+E_K(t)$ is non-increasing in time, and more precisely we have
 \begin{equation}\label{energy_conservation}
 E_P(t) + E_K(t) +  \nu  \int_0^{t}\int_\Omega | \nabla u(x,s) |^2 dxds = E_K(0) + E_P(0)\quad \text{ for all }t\ge0.
 \end{equation}

 From our discussion above, $\rho(\cdot,t)$  remains odd in $x_2$ for all $t\geq 0$, and the property  \textbf{(A3)} holds for all $t\geq 0$. Thus $E_P(t)$ is positive for all times. Combining this with \eqref{energy_conservation} gives
  \begin{equation}\label{energy_bound}
 0\leq E_P(t)\leq E_P(0)+E_K(0) \quad\text{ and }0\leq E_K(t)\leq E_P(0)+E_K(0) \quad\text{ for all }t\geq 0.
 \end{equation}
 In addition, using that $E_P(t)\geq 0, E_K(t)\geq 0$ for all $t\geq 0$, we can send $t\to\infty$ in \eqref{energy_conservation} to obtain
 \begin{equation}\label{dissipation}
  \nu  \int_0^\infty \|\nabla u(t)\|_{L^2(\Omega)}^2 dt \leq E_P(0)+E_K(0).
 \end{equation}
 In the next lemma we compute the second derivative of $E_P$, which will be used later.
\begin{lemma}\label{second_derivative}
Let $(\rho, u)$ be a solution to \eqref{Boussinesq} with initial data $(\rho_0,u_0)$ satisfying \textbf{\textup{(A1)}}--\textbf{\textup{(A3)}}. Then the potential energy $E_P$ defined in \eqref{def_E} satisfies
\begin{equation}\label{2nd_der}
 E_P''(t) = A(t) +   B(t) - \delta(t) \quad\text{ for all }t\geq 0,
\end{equation}
where
\begin{equation}\label{def_A}
A(t):=\sum_{i,j=1}^2\int_{\Omega}((-\Delta)^{-1}\partial_2\rho) \partial_i u_j\partial_{j}u_i  \,dx,~~ B(t):=  \nu  \int_\Omega \rho  \Delta u_2 dx, ~~and ~~ \delta(t):=\rVert \partial_{1}\rho \rVert_{\dot{H}^{-1}(\Omega)}^2.
\end{equation}
\end{lemma}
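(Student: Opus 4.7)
The plan is to differentiate the identity $E_P'(t) = \int_\Omega \rho u_2\,dx$ obtained in \eqref{derivative_potential}, substitute the evolution equations, and then identify the pressure contribution through the elliptic equation for $p$. Concretely, I would write
\[
E_P''(t) = \int_\Omega \rho_t u_2\,dx + \int_\Omega \rho\,\partial_t u_2\,dx,
\]
use $\rho_t = -u\cdot\nabla\rho$ in the first integral and integrate by parts (exploiting $\nabla\cdot u = 0$, together with compact support in $\mathbb{R}^2$ or periodicity/symmetry on $\mathbb{T}^2$ to discard boundary terms) to obtain $\int_\Omega \rho_t u_2\,dx = \int_\Omega \rho\, u\cdot\nabla u_2\,dx$. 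For the second integral I would use the momentum equation, $\partial_t u_2 = -u\cdot\nabla u_2 - \partial_2 p - \rho + \nu\Delta u_2$. The two transport terms cancel, leaving
\[
E_P''(t) = -\int_\Omega \rho\,\partial_2 p\,dx - \int_\Omega \rho^2\,dx + \nu\int_\Omega \rho\,\Delta u_2\,dx.
\]
The last term is exactly $B(t)$, so the remaining work is to rewrite the first two terms as $A(t)-\delta(t)$.

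Next, I would derive the elliptic equation for the pressure. Taking the divergence of the momentum equation and using $\nabla\cdot u = 0$ gives
\[
-\Delta p = \sum_{i,j=1}^{2}\partial_i u_j\,\partial_j u_i + \partial_2 \rho,
\]
where both terms on the right-hand side have zero mean on $\mathbb{T}^2$ (for the nonlinear term, integration by parts and $\nabla\cdot u=0$), so $(-\Delta)^{-1}$ acts unambiguously. Integrating by parts once,
\[
-\int_\Omega \rho\,\partial_2 p\,dx = \int_\Omega p\,\partial_2 \rho\,dx = \int_\Omega \bigl((-\Delta)^{-1}\partial_2\rho\bigr)\sum_{i,j}\partial_i u_j\,\partial_j u_i\,dx + \int_\Omega \bigl((-\Delta)^{-1}\partial_2\rho\bigr)\partial_2\rho\,dx,
\]
where I moved $(-\Delta)^{-1}$ off of the two inputs of $p$ using self-adjointness. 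The first integral on the right is $A(t)$, and the second one equals $\|\partial_2\rho\|_{\dot H^{-1}(\Omega)}^2$.

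Finally, I would combine this with the $-\int_\Omega\rho^2\,dx$ term using the Plancherel identity
\[
\|\rho\|_{L^2(\Omega)}^2 = \|\partial_1\rho\|_{\dot H^{-1}(\Omega)}^2 + \|\partial_2\rho\|_{\dot H^{-1}(\Omega)}^2,
\]
which holds for mean-zero $\rho$ (guaranteed by the $x_2$-oddness in \textbf{(A2)}), so that $-\|\rho\|_{L^2}^2 + \|\partial_2\rho\|_{\dot H^{-1}}^2 = -\|\partial_1\rho\|_{\dot H^{-1}}^2 = -\delta(t)$. Putting everything together yields \eqref{2nd_der}.

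The arithmetic is routine; the only delicate points I expect are the justification of the integrations by parts and of the $(-\Delta)^{-1}$ manipulations. On $\mathbb{R}^2$ this is handled by the compact support of $\rho(\cdot,t)$ together with $u(\cdot,t)\in H^k$ for all $k$, and on $\mathbb{T}^2$ by periodicity plus the mean-zero properties guaranteed by the symmetries in \textbf{(A2)}. Given the regularity recalled at the start of Section~2, differentiating under the integral sign to obtain $E_P''(t)$ is also standard, so no deeper obstruction is anticipated.
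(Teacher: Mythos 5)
Your proposal is correct and follows essentially the same route as the paper: differentiate $E_P'(t)=\int_\Omega \rho u_2\,dx$, cancel the transport terms, solve the elliptic equation for $p$, and identify $A(t)-\delta(t)$. The only cosmetic difference is that the paper extracts $-\delta(t)$ via the operator identity $-(-\Delta)^{-1}\partial_{22}\rho-\rho=(-\Delta)^{-1}\partial_{11}\rho$, whereas you use the equivalent Plancherel identity $\|\rho\|_{L^2}^2=\|\partial_1\rho\|_{\dot H^{-1}}^2+\|\partial_2\rho\|_{\dot H^{-1}}^2$ (correctly noting the mean-zero requirement on $\mathbb{T}^2$).
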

\begin{proof}
Differentiating \eqref{derivative_potential} in time, we get
\begin{equation}\label{2nd_temp}
\begin{split}
E_P''(t) &=\int_{\Omega}{-u\cdot \nabla(\rho u_2) + \rho\left( -\partial_2p - \rho +  \nu \Delta u_2 \right)}dx\\
&= \int_{\Omega}{\rho\left( -\partial_2p - \rho +  \nu \Delta u_2 \right)}dx,
\end{split}
\end{equation}
where the second inequality follows from the incompressibility of $u$ and the fact that the boundary integral is zero as we apply the divergence theorem: for $\Omega=\R^2$ it follows from $\rho(\cdot,t)$ having compact support, whereas for $\Omega = \T^2$ we are using $u\cdot n=0$ on the boundary of $ [-\pi,\pi]^2$ due to our symmetry assumptions in \textbf{(A2)}. 
Comparing \eqref{2nd_temp} with our goal \eqref{2nd_der}, it suffices to show that
\begin{equation}\label{claim1}
\int_{\Omega}\rho(-\partial_2 p - \rho) dx= A(t) -\delta(t).
\end{equation}
To do so, we take divergence in the equation for $u$ in \eqref{Boussinesq}. Using the incompressibility of $u$, we get $\nabla\cdot \left(u\cdot \nabla u\right) = -\Delta p - \partial_2\rho$, hence
\[
p = (-\Delta)^{-1} \nabla \cdot \left( u \cdot \nabla u \right) + (-\Delta)^{-1}\partial_2 \rho ,
\]
where $(-\Delta)^{-1}$ is the inverse Laplacian in $\Omega$ (which is either $\R^2$ or $\T^2$) defined in the standard way using Fourier transform (for $\Omega=\R^2$) or Fourier series (for $\Omega=\T^2$).
Therefore it follows that
\begin{align*}
-\partial_2p - \rho &= -\partial_2(-\Delta)^{-1} \nabla \cdot (u\cdot \nabla u) - (-\Delta)^{-1}\partial_{22}\rho  - \rho \\
& = -\sum_{i,j=1}^{2}\partial_2(-\Delta)^{-1}\left(\partial_{i}u_j\partial_{j}u_i\right) + (-\Delta)^{-1}\partial_{11}\rho.
 \end{align*}
This immediately yields that
\begin{align*}
 \int_\Omega \rho(-\partial_2p - \rho) dx & = -\sum_{i,j=1}^2 \int_\Omega \rho \partial_2(-\Delta)^{-1}\left(\partial_{i}u_j\partial_{j}u_i \right) dx + \int_\Omega \rho (-\Delta)^{-1}\partial_{11}\rho \,dx\\
 & = A(t) - \delta(t),
\end{align*}
where the second equality follows from integration by parts. This finishes the proof.
\end{proof}

The relation between $\delta(t)$ and $\|\rho(t)\|_{\dot{H}^s(\Omega)}$ has been investigated in \cite{KY}. Below we state the results from \cite{KY} and give a slightly improved estimate for the $\Omega=\mathbb{R}^2$ case.\footnote{In  \cite{KY}, the estimate corresponding to \eqref{eq_delta1} is \cite[equation (3.4)]{KY}, where an extra condition $\|\partial_1 \mu\|_{\dot{H}^{-1}}^2< \frac{1}{4}\|\mu\|_{L^2}^2$  was imposed.  In this lemma we give a slightly improved estimate where this assumption is dropped.} For the sake of completeness, we give a proof  in the appendix.  In the statement of the lemma we replace $\rho(t)$ by $\mu$, to emphasize that the estimate does not depend on the equation that $\rho(t)$ satisfies.

\begin{lemma}\label{smallinx1}
\textup{(a)} Assume $\Omega = \R^2$. Consider all $\mu \in C_c^{\infty}(\R^2)$ that is odd in $x_2$ and not identically zero. For all such $\mu$, there exists $c_1(s,\| \mu \|_{L^1},\| \mu \|_{L^2})>0$ such that 
\begin{equation}
\label{eq_delta1}
\| \mu\|_{\dot{H}^{s}(\R^2)} \ge c_1 \left(\| \partial_1 \mu \|_{\dot{H}^{-1}(\R^2)}^2\right)^{-\frac{s}{4}} \quad\text{ for all }s > 0.
\end{equation}

\noindent\textup{(b)} Assume $\Omega = \T^2$. Consider all $\mu \in C^{\infty}(\T^2)$ that is not identically zero, odd in $x_2$, even in $x_1$, with $\mu = 0$ on the $x_2$-axis, and $\mu\geq 0$ in $\T\times[0,\pi]$. For all such $\mu$, there exists $c_2(s, \int_{\T\times[0,\pi]} \mu^{1/3} dx)>0$ such that
\begin{equation}
\label{eq_delta2}
\| \mu \|_{\dot{H}^{s}(\T^2)} \ge c_2 \left(\| \partial_1 \mu \|_{\dot{H}^{-1}(\T^2)}^2\right)^{-s+\frac{1}{2}}\quad\text{ for all }s>\frac{1}{2}.
\end{equation}
\end{lemma}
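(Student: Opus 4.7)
The plan is to prove both estimates by a Plancherel-type decomposition of $\|\mu\|_{L^2}^2$ over three carefully chosen frequency regions: one controlled by $\|\partial_1\mu\|_{\dot{H}^{-1}}$, one by $\|\mu\|_{\dot{H}^s}$, and a ``low-frequency cone near the $\xi_2$-axis'' controlled by a size measure of $\mu$ itself. Optimizing the parameters defining the cutoffs then produces the two advertised powers of $\|\partial_1\mu\|_{\dot{H}^{-1}}^2$.

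For part (a), I would split $\|\mu\|_{L^2}^2=\int_{\R^2}|\hat\mu|^2\,d\xi$ over $A_\epsilon:=\{|\xi_1|\ge\epsilon|\xi|\}$, $B_R:=\{|\xi|\ge R\}$, and the complementary cone $C_{\epsilon,R}:=\R^2\setminus(A_\epsilon\cup B_R)$, with $\epsilon\in(0,1)$ and $R>0$ to be chosen. The elementary inequalities $\int_{A_\epsilon}|\hat\mu|^2\le\epsilon^{-2}\|\partial_1\mu\|_{\dot{H}^{-1}}^2$ and $\int_{B_R}|\hat\mu|^2\le R^{-2s}\|\mu\|_{\dot{H}^s}^2$ handle the first two regions; for the third, $C_{\epsilon,R}$ lies in a rectangle of measure $\lesssim\epsilon R^2$, and Hausdorff--Young gives $\int_{C_{\epsilon,R}}|\hat\mu|^2\lesssim\epsilon R^2\|\mu\|_{L^1}^2$. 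Combining,
\[
\|\mu\|_{L^2}^2\lesssim\epsilon^{-2}\|\partial_1\mu\|_{\dot{H}^{-1}}^2+\epsilon R^2\|\mu\|_{L^1}^2+R^{-2s}\|\mu\|_{\dot{H}^s}^2.
\]
When $\|\partial_1\mu\|_{\dot{H}^{-1}}<\|\mu\|_{L^2}/2$, I would set $\epsilon:=2\|\partial_1\mu\|_{\dot{H}^{-1}}/\|\mu\|_{L^2}$ so that the first term absorbs a quarter of the left side, and then balance the remaining two terms in $R$; the algebra gives $\|\mu\|_{\dot{H}^s}\gtrsim\|\mu\|_{L^2}^{(3s+2)/2}\|\mu\|_{L^1}^{-s}\|\partial_1\mu\|_{\dot{H}^{-1}}^{-s/2}$, i.e.\ \eqref{eq_delta1} with an explicit constant depending on $\|\mu\|_{L^1}$ and $\|\mu\|_{L^2}$. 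In the complementary regime $\|\partial_1\mu\|_{\dot{H}^{-1}}\ge\|\mu\|_{L^2}/2$, the right-hand side factor $(\|\partial_1\mu\|_{\dot{H}^{-1}}^2)^{-s/4}$ is already bounded above by $(\|\mu\|_{L^2}/2)^{-s/2}$, and the desired inequality reduces to the classical interpolation $\|\mu\|_{\dot{H}^s}\gtrsim\|\mu\|_{L^2}^{s+1}/\|\mu\|_{L^1}^s$, obtained by omitting the first term of the display and optimizing in $R$. This case split is exactly what lets us drop the smallness hypothesis $\|\partial_1\mu\|_{\dot{H}^{-1}}^2<\tfrac14\|\mu\|_{L^2}^2$ imposed on the analogous bound in \cite{KY}.

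For part (b), the same decomposition on the discrete lattice $\Z^2$ applies, with one crucial difference: the slice $\{n_1=0,\,n_2\neq 0\}$ contributes independently of $\|\partial_1\mu\|_{\dot{H}^{-1}}$ and must be handled via the structural assumptions on $\mu$. On $\{n_1\neq 0,\,|n|\le R\}$ we use $n_1^2/|n|^2\ge R^{-2}$ to bound the contribution by $R^2\|\partial_1\mu\|_{\dot{H}^{-1}}^2$, and on $\{|n|>R\}$ the bound $R^{-2s}\|\mu\|_{\dot{H}^s}^2$ is standard. For the zero-slice, note that the $x_1$-average $\bar\mu(x_2):=\tfrac{1}{2\pi}\int_{\T}\mu(x_1,x_2)\,dx_1$ satisfies $\sum_{n_2\neq 0}|\hat\mu(0,n_2)|^2=\|\bar\mu\|_{L^2(\T)}^2$; the evenness in $x_1$, together with $\mu=0$ on the $x_2$-axis and $\mu\ge 0$ on $\T\times[0,\pi]$, forces $\mu$ to vanish to order at least two at $x_1=0$, so $\mu$ cannot concentrate there and must spread out in $x_1$. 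Quantifying this spreading via Jensen's inequality for the concave map $t\mapsto t^{1/3}$, combined with a Sobolev-type embedding bounding $\|\mu\|_{L^\infty}$ by $\|\mu\|_{\dot{H}^s}$, gives an estimate of the form $\|\bar\mu\|_{L^2(\T)}^2\lesssim \bigl(\int\mu^{1/3}dx\bigr)^{a}\|\mu\|_{\dot{H}^s}^{b}$ for suitable exponents. Plugging this into the three-region sum and optimizing $R$ produces the claimed exponent $-s+\tfrac12$ in \eqref{eq_delta2}.

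The main obstacle is the zero-slice analysis in part (b): identifying the precise interpolation that relates the nonstandard functional $\int\mu^{1/3}\,dx$ (well defined only because $\mu\ge 0$ on $\T\times[0,\pi]$) to $\|\bar\mu\|_{L^2(\T)}$ in a way that both exploits the $x_2$-axis vanishing and dovetails with the final optimization in $R$. Part (a) is more mechanical once the three-region split is in place; its only new technical point is the clean case split that removes the smallness hypothesis of \cite{KY}.
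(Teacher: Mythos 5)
Your part (a) is correct and is essentially the paper's own argument in different clothing: both proofs split frequency space into the cone $\{|\xi_1|\gtrsim \sqrt{\delta/A}\,|\xi|\}$ (where $\delta:=\|\partial_1\mu\|_{\dot H^{-1}}^2$ and $A:=\|\mu\|_{L^2}^2$), a low-frequency region controlled by $\|\hat\mu\|_{L^\infty}\le (2\pi)^{-1}\|\mu\|_{L^1}$, and a remainder that is then forced to carry a fixed fraction of $A$ at frequency height $|\xi_2|\gtrsim A^{3/4}\delta^{-1/4}\|\mu\|_{L^1}^{-1}$; your case split on $\delta<A/4$ versus $\delta\ge A/4$ is exactly how the paper removes the smallness hypothesis of \cite{KY}.

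Part (b), however, has a genuine gap, and the decomposition you propose cannot reach the stated exponent. The $\delta$-dependence of your final bound is determined solely by balancing the term $R^2\delta$ (from $\{n_1\neq 0,\ |n|\le R\}$, using $n_1^2/|n|^2\ge c R^{-2}$) against $R^{-2s}\|\mu\|_{\dot H^s}^2$; the zero-slice estimate you invoke contains neither $R$ nor $\delta$ and so can only affect constants. Balancing forces $R\sim\delta^{-1/2}$ and hence at best $\|\mu\|_{\dot H^s}\gtrsim \delta^{-s/2}=\bigl(\|\partial_1\mu\|_{\dot H^{-1}}^2\bigr)^{-s/2}$, which for $s>1$ is strictly weaker than the claimed $\bigl(\|\partial_1\mu\|_{\dot H^{-1}}^2\bigr)^{-s+\frac12}$ when $\delta$ is small. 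Moreover the key inequality you posit for the zero slice, $\|\bar\mu\|_{L^2(\T)}^2\lesssim\bigl(\int\mu^{1/3}\bigr)^a\|\mu\|_{\dot H^s}^b$, is neither proved nor plausible as stated (the Sobolev embedding $\dot H^s(\T^2)\hookrightarrow L^\infty$ you lean on needs $s>1$, while the lemma must cover all $s>\tfrac12$), and even granting it, it points the inequality the wrong way: it bounds a piece of $\|\mu\|_{L^2}^2$ by the quantity you are trying to bound from below, without any mechanism tying smallness of $\delta$ to largeness of $\|\mu\|_{\dot H^s}$.

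The paper's route is genuinely one-dimensional and is what produces the exponent $-s+\tfrac12$. Using oddness in $x_2$ one isolates the first $\sin$-mode $g(x_1):=\int_0^\pi\sin(x_2)\,\mu(x_1,x_2)\,dx_2$, which by the hypotheses is nonnegative, even, vanishes at $x_1=0$, and has mean $\bar g\gtrsim\bigl(\int_{[0,\pi]^2}\mu^{1/3}dx\bigr)^3$ by H\"older; meanwhile $\int_{\T}|g-\bar g|^2\,dx_1\lesssim\delta$. A nonnegative $1$D function that touches $0$ yet has positive mean and variance at most $C\delta$ must transition from $0$ to order $\bar g$ across a set of length $\lesssim\delta/\bar g^2$, and the $1$D scaling $\|\cdot\|_{\dot H^s(\T)}\sim\ell^{\frac12-s}$ for a feature of width $\ell$ then yields $\|g\|_{\dot H^s(\T)}\gtrsim\delta^{-s+\frac12}$ (this is \cite[Lemma~3.3]{KY}), which transfers to $\|\mu\|_{\dot H^s(\T^2)}$ since $s>\tfrac12$. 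Your write-up correctly identifies the zero-slice as the crux, but the quantitative step you need is this $1$D transition-layer lemma, not a Jensen/Sobolev interpolation.
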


  \subsection{Infinite-in-time growth of Sobolev norms}

Using Lemma~\ref{second_derivative}, for any $t_2>t_1\geq 0$, integrating $E_P''$ from $t_1$ to $t_2$, we get
\begin{equation}\label{energy_identity}
E_P'(t_2)-E_P'(t_1) = \int_{t_1}^{t_2} A(t)dt + \int_{t_1}^{t_2}B(t)dt - \int_{t_1}^{t_2}\delta(t)dt.
\end{equation}
In the next lemma we estimate the two integrals $\int_{t_1}^{t_2} A(t)dt$ and $\int_{t_1}^{t_2}B(t)dt$ on the right hand side.

\begin{lemma}\label{Abound} Assume $\nu>0$.
Let $(\rho, u)$ be a solution to \eqref{Boussinesq} with initial data $(\rho_0,u_0)$ satisfying \textbf{\textup{(A1)}}--\textbf{\textup{(A3)}}. Then for all $t_2>t_1\geq 0$, $A(t)$ defined in \eqref{def_A} satisfies
\begin{align}\label{atestimates}
 \int_{t_1}^{t_2} |A(t)| dt \le C(\rho_0) \int_{t_1}^{t_2} \rVert \nabla u(t) \rVert_{L^{2}(\Omega)}^2dt.
\end{align}
Furthermore, for all $s\geq 1$ and $t_2>t_1\geq 0$, $B(t)$ defined in \eqref{def_A} satisfies
\begin{align}\label{btestimates}
\int_{t_1}^{t_2} |B(t)|dt \le C(s,\rho_0) \nu  \left(\int_{t_1}^{t_2} \rVert \nabla u(t) \rVert_{L^{2}(\Omega)}^2dt\right)^{\frac12}  \left(\int_{t_1}^{t_2} \rVert \rho(t) \rVert^{\frac{2}s}_{\dot{H}^s(\Omega)}dt\right)^{\frac12}.
\end{align}
\end{lemma}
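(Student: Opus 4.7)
\emph{Plan.} Both bounds follow from a Hölder inequality at fixed time combined with the conservation of $L^p$-norms of $\rho$ in \eqref{lp_conservation}; the only extra ingredients are a $t$-independent $L^\infty$-bound on $(-\Delta)^{-1}\partial_2\rho$ for $A$, and a Gagliardo--Nirenberg interpolation for $B$.

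\emph{Estimate on $A(t)$.} First I would apply Hölder's inequality, placing $(-\Delta)^{-1}\partial_2\rho$ in $L^\infty$ and the two velocity gradients in $L^2$:
\[
|A(t)|\le \|(-\Delta)^{-1}\partial_2\rho(t)\|_{L^\infty(\Omega)}\,\|\nabla u(t)\|_{L^2(\Omega)}^2.
\]
It remains to bound $\|(-\Delta)^{-1}\partial_2\rho(t)\|_{L^\infty}$ by a constant depending only on $\rho_0$. The convolution kernel of $(-\Delta)^{-1}\partial_2$ is, up to constants, $x_2/|x|^2$, i.e.\ of order $|x|^{-1}$. On $\mathbb{T}^2$ this kernel belongs to $L^1(\mathbb{T}^2)$ (the only singularity is at the origin, which is absolutely integrable in two dimensions), so Young's convolution inequality gives $\|(-\Delta)^{-1}\partial_2\rho\|_{L^\infty}\le C\|\rho\|_{L^\infty}$. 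On $\mathbb{R}^2$ the kernel is not globally $L^1$, so I would split the convolution into the near part $|x-y|\le 1$ and the far part $|x-y|\ge 1$, which are bounded by $C\|\rho\|_{L^\infty}$ and $C\|\rho\|_{L^1}$ respectively. In either case, the conservation \eqref{lp_conservation} replaces these norms by $\|\rho_0\|_{L^p}$, and time-integrating yields \eqref{atestimates}.

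\emph{Estimate on $B(t)$.} I would first integrate by parts---the boundary terms vanish thanks to the compact support of $\rho(t)$ on $\mathbb{R}^2$ and periodicity on $\mathbb{T}^2$---to get
\[
B(t)=-\nu\int_\Omega \nabla\rho(t)\cdot\nabla u_2(t)\,dx,
\]
and Cauchy--Schwarz then gives $|B(t)|\le \nu\|\nabla\rho(t)\|_{L^2}\|\nabla u(t)\|_{L^2}$. For $s\ge1$, the Gagliardo--Nirenberg interpolation $\|\rho\|_{\dot H^1}\le \|\rho\|_{L^2}^{1-1/s}\|\rho\|_{\dot H^s}^{1/s}$ (immediate on the Fourier side from Hölder with exponents $s$ and $s/(s-1)$) combined with $\|\rho(t)\|_{L^2}=\|\rho_0\|_{L^2}$ yields
\[
|B(t)|\le C(s,\rho_0)\,\nu\,\|\rho(t)\|_{\dot H^s}^{1/s}\|\nabla u(t)\|_{L^2}.
\]
Integrating over $[t_1,t_2]$ and one final Cauchy--Schwarz in time gives \eqref{btestimates}.

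\emph{Main obstacle.} The only mildly technical step is the uniform-in-time $L^\infty$-bound on $(-\Delta)^{-1}\partial_2\rho$ in $\mathbb{R}^2$, where the kernel fails to be in $L^1(\mathbb{R}^2)$; the near/far kernel splitting above resolves it using two conserved $L^p$-norms of $\rho$. Everything else is routine Hölder and interpolation.
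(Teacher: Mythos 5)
Your proof is correct, and the overall architecture matches the paper's: Hölder at fixed time with $(-\Delta)^{-1}\partial_2\rho$ in $L^\infty$ for the $A$ estimate, and integration by parts, Cauchy--Schwarz, and the Gagliardo--Nirenberg interpolation $\|\rho\|_{\dot H^1}\le\|\rho\|_{L^2}^{1-1/s}\|\rho\|_{\dot H^s}^{1/s}$ together with \eqref{lp_conservation} for the $B$ estimate (your order of operations --- Cauchy--Schwarz in space, then interpolation, then Cauchy--Schwarz in time --- differs only trivially from the paper's single Cauchy--Schwarz in the product measure). Where you genuinely diverge is in the key sub-step, the uniform-in-time bound $\|(-\Delta)^{-1}\partial_2\rho(t)\|_{L^\infty}\le C(\rho_0)$: the paper proves it by the Hardy--Littlewood--Sobolev inequality with $(\alpha,p,q)=(1,4/3,4)$ plus $L^4$-boundedness of the Riesz transform to get a $W^{1,4}$ bound, and then Morrey's embedding $W^{1,4}\subset C^{0,1/2}$, all fed by the conserved $L^{4/3}$ and $L^4$ norms of $\rho$; you instead estimate the convolution kernel $\partial_2 G\sim x_2/|x|^2$ directly, using Young's inequality on $\mathbb{T}^2$ (where the kernel is in $L^1$) and a near/far splitting on $\mathbb{R}^2$, fed by the conserved $L^\infty$ and $L^1$ norms. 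Your route is more elementary and avoids the harmonic-analysis machinery entirely; the paper's route is softer in that it never needs an explicit kernel representation and would transfer more readily to settings (e.g. general bounded domains) where the Green's function is less explicit. Both are valid here since \textbf{(A1)} guarantees all the $L^p$ norms involved are finite and conserved.
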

\begin{proof}
Let us show \eqref{atestimates} first.
Let $f := (-\Delta)^{-1}\partial_2\rho$, and we claim that 
\begin{align}\label{linftytoc}
\rVert f(\cdot,t)\rVert_{L^{\infty}(\Omega)} \le C(\rho_0) \quad\text{ for all }t\geq 0.
\end{align} Once this is proved, it follows that
\[
\int_{t_1}^{t_2} |A(t)|dt \le  \int_{t_1}^{t_2} \rVert f \rVert_{L^{\infty}(\Omega)}\rVert \nabla u \rVert_{L^2(\Omega)}^2dt \le C(\rho_0) \int_{t_1}^{t_2} \rVert \nabla u \rVert_{L^{2}(\Omega)}^2dt.
\]
To estimate $\rVert f\rVert_{L^{\infty}(\Omega)}$, we recall the following Hardy-Littlewood-Sobolev inequality for $\Omega=\mathbb{R}^2$ or $\mathbb{T}^2$: (when $\Omega=\mathbb{T}^2$, the function $g$ needs to satisfy an additional assumption $\int_{\Omega} g(x) dx=0$) 
\[
\rVert(-\Delta)^{-\frac{\alpha}{2}} g \rVert_{L^{q}(\Omega)} \le C(\alpha,p,q) \rVert g \rVert_{L^{p}(\Omega)}\quad \text{for $0<\alpha < 2$, $1 < p< q <\infty$, and $\frac{1}{q}=\frac{1}{p}-\frac{\alpha}{2}$}.
\]
We choose $\alpha=1$, $q=4$, $p=\frac{4}{3}$ and $g= (-\Delta)^{\frac{1}{2}}f(\cdot,t) $ (note that $g =  (-\Delta)^{-\frac{1}{2}}\partial_2 \rho$ indeed has mean zero when $\Omega=\mathbb{T}^2$), then the above inequality becomes
\begin{align*}
&\rVert f(\cdot,t) \rVert_{L^4(\Omega)} \le C \|(-\Delta)^{\frac12} f\|_{L^{\frac{4}{3}}(\Omega)}=C\| (-\Delta)^{-\frac{1}{2}}\partial_2\rho \|_{L^{\frac{4}{3}}(\Omega)} \le C\rVert \rho \rVert_{L^{\frac{4}{3}}(\Omega)} \le C(\rho_0),\end{align*}
and we also have
\[
\rVert (-\Delta)^{1/2}f(\cdot,t)\rVert_{L^{4}(\Omega)} = \| (-\Delta)^{-\frac{1}{2}}\partial_2\rho\|_{L^4(\Omega)} \le C \rVert \rho \rVert_{L^4(\Omega)} \le C(\rho_0).
\]
In the above two estimates, the second-to-last inequality in both equations is due to the Riesz transform being bounded in $L^{p}(\Omega)$ for $1<p<\infty$, and the last inequality in both equations comes from \eqref{lp_conservation}.
Combining these estimates together, we have 
\[
\|f(\cdot,t)\|_{W^{1,4}(\Omega)} \le C(\rho_0) \quad\text{ for all }t\geq 0.
\] Then the boundedness of $f$ follows immediately from Morrey's inequality $W^{1,4}(\Omega) \subset C^{0,\frac{1}{2}}(\Omega)$ for both $\Omega=\mathbb{R}^2$ and $\mathbb{T}^2$. This leads to $\|f(\cdot,t)\|_{L^\infty(\Omega)} \leq C \|f(\cdot,t)\|_{W^{1,4}(\Omega)} \le C(\rho_0)$ for all $t\geq 0$,
which proves \eqref{linftytoc}.

Now we turn to the estimate for $B(t)$. Applying the divergence theorem to the definition of $B(t)$ from \eqref{def_A}, we see that
\begin{equation}\label{bt_bound1}
\begin{split}
 \int_{t_1}^{t_2} |B(t)|dt &=   \nu   \int_{t_1}^{t_2} \left|  \int_{\Omega}\nabla \rho \cdot \nabla u_2 dx\right| dt
\\
&\le  \nu  \left(\int_{t_1}^{t_2}\rVert u (t)\rVert^2_{\dot{H}^1(\Omega)}dt \right)^{\frac12}\left( \int_{t_1}^{t_2} \rVert \rho(t)\rVert^2_{\dot{H}^1(\Omega)}dt\right)^{\frac12},
\end{split}
\end{equation}
where we used the Cauchy-Schwarz inequality in the last step. Using  the Gagliardo-Nirenberg interpolation inequality, we obtain
\[
\begin{split}
\int_{t_1}^{t_2}|B(t)|dt 
&\le  \nu  \left(\int_{t_1}^{t_2} \rVert \nabla u(t) \rVert_{L^{2}(\Omega)}^2dt\right)^{\frac12}  \left(\int_{t_1}^{t_2}  C(s) \rVert \rho(t)\rVert_{L^2}^{2(1-\frac{1}s)}\rVert \rho(t)\rVert_{\dot{H}^s(\Omega)}^{\frac{2}s}dt\right)^{\frac12}\\
&\le  C(s,\rho_0) \nu  \left(\int_{t_1}^{t_2} \rVert \nabla u (t)\rVert_{L^{2}(\Omega)}^2dt\right)^{\frac12}  \left(\int_{t_1}^{t_2} \rVert \rho(t) \rVert^{\frac{2}s}_{\dot{H}^s(\Omega)}dt\right)^{\frac12},
\end{split}
\]
where the last inequality follows from \eqref{lp_conservation}. This finishes the proof of \eqref{btestimates}.
\end{proof}

Now we are ready to prove Theorem~\ref{infinite_growth}.

\begin{proof}[\textbf{\textup{Proof of Theorem~\ref{infinite_growth}}}]
The main idea of the proof is to estimate all terms in \eqref{energy_identity} for $t_1=T$ and $t_2=2T$ for $T\gg 1$, and obtain a contradiction if $\sup_{t\in[T,2T]}\|\rho(t)\|_{\dot{H}^s}$ grows slower than certain power of $T$. 

First, to bound the left hand side of \eqref{energy_identity}, note that \eqref{derivative_potential} and
the Cauchy-Schwarz inequality yields
\begin{equation}
|E_P'(t)| \le \rVert \rho(t) \rVert_{L^2}\rVert u(t)\rVert_{L^2} \le \|\rho_0\|_{L^2} \sqrt{2E_K(t)} \leq C(\rho_0,u_0) < \infty \quad  \text{ for all }t\ge0,
\end{equation}
where the second inequality follows from \eqref{lp_conservation} and the definition of $E_K$ in \eqref{def_E}, and the third inequality follows from \eqref{energy_bound}. Thus
\begin{equation}\label{E_P_prime}
 |E_P'(2T)-E_P'(T)|\leq C_0(\rho_0,u_0)< \infty \quad\text{ for all } T>0.
 \end{equation}

Plugging the estimates \eqref{E_P_prime} and \eqref{atestimates} into the identity  \eqref{energy_identity}, we have
\begin{equation}\label{delta_upper1}
\int_T^{2T} \delta(t)dt \le C_0(\rho_0,u_0) + C_1(\rho_0) \int_{T}^{2T} \rVert \nabla u(t) \rVert_{L^{2}(\Omega)}^2dt + \int_T^{2T}|B(t)|dt \quad \text{ for all }T> 0.
\end{equation}
Next we will bound the two integrals on the right hand side from above, and $\int_T^{2T} \delta(t)dt$ from below.  Let us define
\[
\eta(T):= \int_{T}^{2T} \rVert \nabla u(t) \rVert_{L^{2}(\Omega)}^2dt \quad\text{ and }\quad M_s(T) := \sup_{t\in[T,2T]} \|\rho(t)\|_{\dot{H}^s(\Omega)}.
 \]
  Combining \eqref{energy_conservation} and \eqref{energy_bound} yields 
\[
 \int_0^{\infty} \|\nabla u(t)\|^2_{L^2(\Omega)} dt \leq \nu^{-1} C(\rho_0,u_0) < \infty,
\]
where we also used the assumption $\nu>0$.
This implies
\begin{equation}\label{liminf_eta}
\lim_{T\to\infty} \eta(T)=0.
\end{equation}
To bound $\int_T^{2T}|B(t)|dt$, using \eqref{btestimates} and the definitions of $\eta(T)$ and $M_s(T)$,
\begin{equation}\label{B_bd}
\begin{split}
\int_T^{2T}|B(t)|dt &\le C(s,\rho_0) \nu  \left(\int_{T}^{2T} \rVert \nabla u \rVert_{L^{2}(\Omega)}^2dt\right)^{\frac12}  \left(\int_{T}^{2T} \rVert \rho \rVert^{\frac{2}s}_{\dot{H}^s(\Omega)}dt\right)^{\frac12}\\
&\leq C_2(s,\rho_0, \nu)  \eta(T)^{\frac12} M_s(T)^{\frac{1}{s}} T^{\frac12} \quad\text{ for all }s\geq 1,\,  T>0.
\end{split}
\end{equation}

Next we will bound the integral $\int_T^{2T}\delta(t) dt$ from below. If $\Omega=\mathbb{R}^2$, the assumption \ref{assumption2_rho} allows us to apply Lemma~\ref{smallinx1}(a) to $\rho(\cdot,t)$ (and note that its $L^1$ and $L^2$ norms are preserved in time), so there exists $c_3(s,\rho_0)>0$ such that 
\begin{equation}\label{delta_ineq1}
\|\rho(t)\|_{\dot{H}^s(\mathbb{R}^2)} \geq c_3(s,\rho_0) \delta(t)^{-\frac{s}{4}} \quad \text{ for all } s>0,\,  t>0.
\end{equation}
And if $\Omega=\mathbb{T}^2$, using the assumptions \ref{assumption2_rho} and  \ref{assumption3_rho} (note that these assumptions imply that $\int_{\mathbb{T}\times[0,\pi]} \rho(x,t)^{1/3}dx$ is preserved in time), by Lemma~\ref{smallinx1}(b), there exists $c_4(s,\rho_0)>0$ such that
\begin{equation}\label{delta_ineq2}
\|\rho(t)\|_{\dot{H}^s(\mathbb{T}^2)} \geq c_4(s,\rho_0) \delta(t)^{-(s-\frac12)} \quad \text{ for all } s>\frac12,\, t>0.
\end{equation}

Let us rewrite the equations \eqref{delta_ineq1} and \eqref{delta_ineq2} above in a unified manner for the two cases $\Omega=\mathbb{R}^2$ and $\mathbb{T}^2$, so we do not need to repeat similar proofs twice. For $\Omega$ either being $\mathbb{R}^2$ or $\mathbb{T}^2$, let us define
\begin{equation}\label{def_alpha}
\alpha_\Omega := \begin{cases} \frac{s}{4} & \Omega=\mathbb{R}^2\\
s-\frac12 & \Omega = \mathbb{T}^2
\end{cases}, \quad 
\underline{s}_\Omega := \begin{cases} 0 & \Omega=\mathbb{R}^2\\
\frac12 & \Omega = \mathbb{T}^2
\end{cases}, \quad
c_\Omega(s,\rho_0) := \begin{cases} c_3(s,\rho_0) & \Omega=\mathbb{R}^2\\
c_4(s,\rho_0) & \Omega = \mathbb{T}^2
\end{cases}.
\end{equation}
With these notations, the equations \eqref{delta_ineq1} and \eqref{delta_ineq2} become
\begin{equation}\label{temp001}
\|\rho(t)\|_{\dot{H}^s(\Omega)} \geq c_\Omega(s,\rho_0) \delta(t)^{-\alpha_\Omega} \quad \text{ for all }  s> \underline{s}_\Omega,\, t>0.
\end{equation}
Combining  \eqref{temp001} with the definition of $M_s$ gives
\begin{equation}\label{delta_lower}
\begin{split}
\int_T^{2T}\delta(t)dt \ge  \int_T^{2T} c_\Omega^{\frac{1}{\alpha_\Omega}}\|\rho(t)\|_{\dot{H}^s(\Omega)}^{-\frac{1}{\alpha_\Omega}} dt \geq c_\Omega^{\frac{1}{\alpha_\Omega}} M_s(T)^{-\frac{1}{\alpha_\Omega}} T \quad \text{ for all }s>\underline{s}_\Omega,\, T>0.
\end{split}
\end{equation}

Applying the bounds \eqref{B_bd}, \eqref{delta_lower} and the definition of $\eta(T)$ to the inequality \eqref{delta_upper1} (and note that $\underline{s}_\Omega<1$), we have
\[
c_5 M_s(T)^{-\frac{1}{\alpha_\Omega}} T \leq C_0 + C_1 \eta(T) + C_2  \eta(T)^{\frac12} M_s(T)^{\frac{1}{s}} T^{\frac12} \quad\text{ for all }s\geq 1,\, T>0,
\]
where $c_5:=c_\Omega(s,\rho_0)^{\frac{1}{\alpha_\Omega}}, C_0 := C_0(\rho_0,u_0), C_1 := C_1(\rho_0)$ and $C_2 := C_2(s,\rho_0,\nu)$ -- note that they are all strictly positive and do not depend on $T$.
Rearranging the terms, the inequality is equivalent to
\begin{equation}\label{temp_000}
\left(c_5  - C_2\eta(T)^{\frac12} T^{-\frac12} M_s(T)^{\frac{1}{s}+\frac{1}{\alpha_\Omega}}\right)M_s(T)^{-\frac{1}{\alpha_\Omega}}T \leq C_0+C_1 \eta(T)  \quad\text{ for all }s\geq 1 ,\, T>0.
\end{equation}
We claim that this implies 
\begin{equation}\label{claim_M}
\limsup_{T\to\infty} T^{-\frac12} M_s(T)^{\frac{1}{s}+\frac{1}{\alpha_\Omega}}=+\infty \quad\text{ for all }s\geq 1.
\end{equation}
Towards a contradiction, assume 
\[
A := \limsup_{T\to\infty} T^{-\frac12} M_s(T)^{\frac{1}{s}+\frac{1}{\alpha_\Omega}} < \infty \quad\text{ for some }s\geq 1.
\] Combining this assumption with \eqref{liminf_eta} gives 
\[
\begin{split}
\limsup_{T\to\infty} \eta(T)^{\frac12} T^{-\frac12} M_s(T)^{\frac{1}{s}+\frac{1}{\alpha_\Omega}} &=  \left(\limsup_{T\to\infty} T^{-\frac12} M_s(T)^{\frac{1}{s}+\frac{1}{\alpha_\Omega}}\right) \left(\lim_{T\to\infty} \eta(T)^{\frac12}\right)=0,
\end{split}
\]
so the parenthesis in \eqref{temp_000} converges to $c_5$ as $T\to\infty$. For the remaining term on the left hand of \eqref{temp_000}, we have
\begin{equation}
\begin{split}
\liminf_{T\to\infty} M_s(T)^{-\frac{1}{\alpha_\Omega}}T &= \liminf_{T\to\infty} \left(T^{-\frac12} M_s(T)^{\frac{1}{s}+\frac{1}{\alpha_\Omega}} \right)^{-\frac{s}{s+\alpha_\Omega}} T^{\frac{s+2\alpha_\Omega}{2(s+\alpha_\Omega)}}\\
& = \liminf_{T\to\infty} A^{-\frac{s}{s+\alpha_\Omega}} T^{\frac{s+2\alpha_\Omega}{2(s+\alpha_\Omega)}} = +\infty.
\end{split}
 \end{equation}
The above discussion yields that the liminf  of the left hand side of \eqref{temp_000} is $+\infty$. This contradicts \eqref{liminf_eta}, which says the right hand side of \eqref{temp_000} goes to $C_0<\infty$ as $T\to\infty$. This finishes the proof of the claim \eqref{claim_M}.

Finally, using the definition of $M_s$, we have that \eqref{claim_M} is equivalent to
\[
\limsup_{t \to\infty} t^{-\frac12} \|\rho(t)\|_{\dot{H}^s}^{\frac{1}{s}+\frac{1}{\alpha_\Omega}} = +\infty.
\]
Recalling the definition of $\alpha_\Omega$ from \eqref{def_alpha}, we see that the desired estimates \eqref{growth1} and \eqref{growth2} follow immediately.
\end{proof}

Although it is unclear whether the algebraic rates are sharp, in the next proposition we show that under the assumptions \textbf{\textup{(A1)}}--\textbf{\textup{(A3)}},  $\|\rho(t)\|_{H^1(\Omega)} $ can at most have sub-exponential growth.

\begin{proposition} \label{prop_subexp}
Let $ \Omega=\mathbb{R}^2$ or $\mathbb{T}^2$. For any initial data $(\rho_0,u_0)$ satisfying \textbf{\textup{(A1)}}--\textbf{\textup{(A3)}}, $\|\rho(t)\|_{H^1(\Omega)}$ satisfies the sub-exponential bound
\[
\|\rho(t)\|_{H^1(\Omega)} \lesssim \exp (Ct^{\alpha})\quad\text{ for all }t>0
\] 
for some constant $\alpha\in(0,1)$.
\end{proposition}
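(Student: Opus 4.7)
The plan is to exploit the symmetry-induced integrability $\int_0^\infty \|\nabla u(t)\|_{L^2(\Omega)}^2\,dt < \infty$ from \eqref{dissipation}, which is unavailable in the setting of \cite{KW2020}, in order to refine the exponential/super-exponential upper bounds of that paper into a sub-exponential one.

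First, I would derive the standard transport-type $H^1$ energy inequality for $\rho$: differentiating $\|\nabla\rho\|_{L^2}^2$ using $\rho_t+u\cdot\nabla\rho=0$ together with $\nabla\cdot u=0$ gives
\[
\tfrac{d}{dt}\|\nabla\rho(t)\|_{L^2}^2 \leq C\|\nabla u(t)\|_{L^\infty(\Omega)}\|\nabla\rho(t)\|_{L^2}^2,
\]
so by Gr\"onwall's inequality it suffices to establish
\begin{equation}\label{subexpgoal}
\int_0^t \|\nabla u(s)\|_{L^\infty(\Omega)}\,ds \leq C\,t^{\alpha}\qquad\text{for some }\alpha\in(0,1).
\end{equation}

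Next, following the Kukavica--Wang analysis, I would apply Stokes maximal regularity to the velocity equation with forcing $-\rho e_2-u\cdot\nabla u$---exploiting the conservation $\|\rho(t)\|_{L^\infty}=\|\rho_0\|_{L^\infty}$ from \eqref{lp_conservation} and the uniform bound on $E_K$ from \eqref{energy_bound}---to produce
\[
\|u(t)\|_{W^{2,p}(\Omega)} \leq C(p)(1+t)^{\gamma(p)}
\]
for some $p>2$ and some $\gamma(p)\geq 0$; on the torus $\gamma(p)=0$ by the uniform $W^{2,p}$ estimate of \cite{KW2020}, and on $\R^2$ one expects an explicit $\gamma(p)$ by combining the symmetry-induced uniform energy bound with the heat-semigroup decay as in \cite{KW2020,kukavica2021asymptotic}. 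The 2D Gagliardo-Nirenberg inequality then delivers
\[
\|\nabla u(t)\|_{L^\infty} \leq C\|\nabla u(t)\|_{L^2}^{\theta}\|u(t)\|_{W^{2,p}}^{1-\theta},\qquad \theta = \frac{p-2}{2(p-1)}\in\bigl(0,\tfrac12\bigr).
\]

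Inserting these bounds and applying H\"older's inequality in time with exponents $2/\theta$ and $2/(2-\theta)$, together with the dissipation integrability \eqref{dissipation}, yields
\[
\int_0^t \|\nabla u(s)\|_{L^\infty}\,ds \leq C(1+t)^{(1-\theta)\gamma(p)}\Bigl(\int_0^t\|\nabla u(s)\|_{L^2}^2\,ds\Bigr)^{\theta/2}t^{(2-\theta)/2} \leq C\,t^{\alpha_p},
\]
where $\alpha_p := 1-\tfrac{\theta}{2} + (1-\theta)\gamma(p)$. The sub-exponential bound \eqref{subexpgoal} follows provided $p$ can be chosen so that $\alpha_p<1$, equivalently $\gamma(p)<\theta/(2(1-\theta))$. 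The principal obstacle is producing the $W^{2,p}$ bound on $\R^2$ with a $\gamma(p)$ below this threshold, since on the full plane the uniform $W^{2,p}$ bound of \cite{KW2020} is no longer available; carrying this out requires a careful Stokes-semigroup analysis that crucially uses the symmetry assumptions \textbf{(A2)}--\textbf{(A3)} to preserve the boundedness of both the potential and kinetic energies and thereby the decay of the linearized evolution.
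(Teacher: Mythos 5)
Your overall framework is the same as the paper's: the Gr\"onwall reduction to bounding $\int_0^t\|\nabla u\|_{L^\infty}\,ds$, the Gagliardo--Nirenberg interpolation $\|\nabla u\|_{L^\infty}\lesssim\|\nabla u\|_{L^2}^{\theta}\|\nabla^2 u\|_{L^p}^{1-\theta}$, H\"older in time combined with the symmetry-induced integrability $\int_0^\infty\|\nabla u\|_{L^2}^2\,dt<\infty$ from \eqref{dissipation}, and the exponent bookkeeping (your $\alpha_p=1-\theta/2\to 3/4$ as $p\to\infty$ when $\gamma(p)=0$ matches the paper's $t^{3/4+\epsilon}$). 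For $\Omega=\mathbb{T}^2$ your argument is complete, since the uniform-in-time $W^{2,p}$ bound is quoted from \cite{KW2020}.

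However, for $\Omega=\mathbb{R}^2$ there is a genuine gap: you correctly identify that the needed $W^{2,p}$ bound with $\gamma(p)$ below threshold is ``the principal obstacle,'' but you do not resolve it, and the Stokes-semigroup route you gesture at is not obviously viable (it is unclear that maximal regularity for the forced Navier--Stokes system with forcing $-\rho e_2-u\cdot\nabla u$ yields any polynomial rate, let alone $\gamma(p)<\theta/(2(1-\theta))$, which for large $p$ requires $\gamma(p)<1/2$). The paper closes this gap differently and in fact obtains $\gamma(p)=0$ on $\mathbb{R}^2$: the key observation is that under \textbf{(A1)}--\textbf{(A3)} the integrability \eqref{dissipation}, combined with the estimate $\frac{d}{dt}\|\omega(t)\|_{L^2}^2<C$ from \cite{KW2020}, forces a \emph{uniform-in-time} bound on $\|\omega(t)\|_{L^2(\mathbb{R}^2)}$ (if the $L^2$ norm were large on a long time interval it would violate the finiteness of $\int_0^\infty\|\nabla u\|_{L^2}^2\,dt$). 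One then passes to the modified vorticity $\zeta=\omega-\partial_1(I-\Delta)^{-1}\rho$, whose $L^p$ distance to $\omega$ is controlled by conserved norms of $\rho$, and feeds the uniform $\|\zeta\|_{L^2}$ bound into the differential inequalities of \cite{KW2020} for $\psi_p(t)=\int|\nabla\zeta|^p$, running an induction over $p=2,4,8,\dots$ to get $\psi_p(t)\leq C(p,\nu,\rho_0,u_0)$ uniformly in time, hence $\|\nabla^2 u(t)\|_{L^p}\leq C(p)$. Without this (or an equivalent) quantitative input, your proof does not go through on $\mathbb{R}^2$.
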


\begin{proof}
For both $\Omega=\mathbb{T}^2$ and $\mathbb{R}^2$, 
$\|\nabla\rho(t)\|_{L^2(\Omega)}$ satisfies the estimate (see \cite[Eq. (2.37)]{KW2020})
\begin{equation}\label{rho_up_bd}
\|\nabla \rho(t)\|_{L^2(\Omega)} \lesssim \exp\left(\int_0^t \|\nabla u(s)\|_{L^\infty(\Omega)} ds\right) \|\nabla\rho_0\|_{L^2(\Omega)}.
\end{equation}
  Recall that \eqref{dissipation} gives
 \begin{equation}\label{l2bd0}
 \int_0^\infty \|\nabla u(t)\|_{L^2}^2 dt \leq C(\nu,\rho_0,u_0).
 \end{equation}
Combining this with the Gagliardo--Nirenberg inequality 
\begin{equation}\label{ineqGN}
\|\nabla u\|_{L^\infty(\Omega)} \leq \|\nabla u\|_{L^2(\Omega)}^{\frac{p-2}{2p-2} } \|\nabla^2 u\|_{L^p(\Omega)}^{\frac{p}{2p-2} } \quad\text{ for } p> 2
\end{equation}
 and H\"older's inequality, the exponent in \eqref{rho_up_bd} can be bounded above by
\begin{equation}\label{nabla_bd_temp}
\int_0^t \|\nabla u(s)\|_{L^\infty(\Omega)} ds\leq C(p,\nu,\rho_0,u_0) \left(\int_0^t  \|\nabla^2 u\|_{L^p(\Omega)}^{\frac{2p}{3p-2}} ds\right)^{\frac{3p-2}{4p-4}}.
\end{equation}
When $\Omega=\mathbb{T}^2$, by \cite[Theorem 2.1]{KW2020}, $\|u(t)\|_{W^{2,p}}<C(p,\nu, \rho_0,u_0)$ for all $p<\infty$.  So one can choose $p\gg 1$ to obtain the sub-exponential upper bound 
\begin{equation}\label{subexp}
\|\nabla \rho(t)\|_{L^2} \leq C(\rho_0) \exp \left(C(\epsilon, \nu,\rho_0,u_0)t^{\frac{3}{4}+\epsilon}\right)\quad\text{ for any }\epsilon>0, t>0.
\end{equation}
Next we move on to the $\Omega=\mathbb{R}^2$ case. 
Combining \eqref{l2bd0} (and recall $\|\omega\|_{L^2}=\|\nabla u\|_{L^2}$) with the estimate $\frac{d}{dt} \|\omega(t)\|_{L^2(\mathbb{R}^2)}^2 < C(\nu,\rho_0)$ (see the equation before (3.1) in \cite{KW2020}), we have
\begin{equation}\label{omega_l2_bd}
 \|\omega(t)\|_{L^2(\mathbb{R}^2)} < C(\nu,\rho_0,u_0) \quad\text{ for all }t\geq 0.
\end{equation}
Following the notations from  \cite{KW2020}, let us define $\zeta = \omega - \partial_1 (I-\Delta)^{-1}\rho$ to be the modified vorticity. Since one has $\|\partial_1 (I-\Delta)^{-1}\rho\|_{W^{1,p}} \leq C(p,\rho_0)$ for all $1<p<\infty$, it implies  
\begin{equation}\label{zeta}
\|\zeta-\omega\|_{L^p} \leq  C(p,\rho_0)\quad\text{  and }\quad\|\nabla\zeta-\nabla\omega\|_{L^p} \leq  C(p,\rho_0).
\end{equation} Combining \eqref{omega_l2_bd} and \eqref{zeta} gives a uniform-in-time bound $\|\zeta(t)\|_{L^2}< C(\nu,\rho_0,u_0)$. Defining $\psi_p(t) := \int_{\mathbb{R}^2} |\nabla\zeta(t)|^p$ for $p\geq 2$, \cite[Eq.(3.2)]{KW2020} gives
\[
\psi_2'(t) + \frac{\psi_2^2}{\|\zeta\|_{L^2}^2} \leq C\psi_2 + C, 
\]
thus the above uniform-in-time bound for $\|\zeta\|_{L^2}^2$ gives a uniform-in-time bound for $\psi_2(t)$. For any $2\leq p < \infty$, \cite[Eq.(3.3)]{KW2020} gives
\[
\psi_{2p}'(t) + \frac{\psi_{2p}^2}{C\psi_p^2} \leq Cp^2 \psi_{2p} + Cp^5 \psi_{2p}^{(p-1)/p}.
\]
One can use induction (for $p=2, 4, 8,\dots$) to obtain a uniform-in-time bound $\psi_p(t) \leq C(p,\nu,\rho_0,u_0)$, and combining this bound with \eqref{zeta} gives 
\[
\|\nabla^2 u(t)\|_{L^p} \leq C(p) \|\nabla \omega(t)\|_{L^p} \leq C(p) (\|\nabla\zeta(t)\|_{L^p} + C(p,\rho_0)) \leq C(p,\nu,\rho_0,u_0).
\]
Finally, choosing an arbitrarily large $p\gg 1$ and plugging the above uniform-in-time estimate into \eqref{nabla_bd_temp}, we again have the sub-exponential upper bound \eqref{subexp} for $\Omega=\mathbb{R}^2$.
\end{proof}

\section{Infinite-in-time growth for inviscid Boussinesq and 3D Euler}

\subsection{Vorticity lemma for flows with fixed kinetic energy}
Before proving the main theorems, let us start with a simple observation. It says that for any vector field $u$ in a square $Q = [0,\pi]^2$ with a fixed kinetic energy, if its vorticity integral $A:= \int_Q \omega dx$ is big, then for $1<p\leq \infty$, $\|\omega\|_{L^p}$ must be even bigger, at least of order $A^{3-\frac{2}{p}}$.

\begin{lemma}\label{omega_lp}
Let $Q := [0,\pi]^2$. For any vector field $u \in C^\infty(Q)$, let $\omega := \partial_1 u_2 - \partial_2 u_1$. Let us denote $E_0:=\int_Q |u|^2 dx$, and 
$
A:=\int_Q \omega(x) dx 
$. Then we have the following lower bound for  $\|\omega\|_{L^p(Q)}$:
\begin{equation}\label{omega_lower}
\|\omega\|_{L^p(Q)}\geq c_0 \max\Big\{ E_0^{-1+\frac{1}{p}}|A|^{3-\frac{2}{p}} , |A|\Big\} \quad\text{ for all }p\in[1,\infty],
\end{equation}
where $c_0 = (128\pi^2)^{-1} > 0 $ is a universal constant.
\end{lemma}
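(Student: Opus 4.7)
The plan is to establish the two lower bounds in the $\max\{\cdot\}$ separately and then take the maximum. The bound $\|\omega\|_{L^p(Q)} \geq c_0|A|$ is immediate from Hölder's inequality: $|A| = |\int_Q \omega\,dx| \leq |Q|^{1-1/p}\|\omega\|_{L^p(Q)} \leq \pi^2 \|\omega\|_{L^p(Q)}$. The more delicate bound $\|\omega\|_{L^p(Q)} \geq c_0 E_0^{-1+1/p}|A|^{3-2/p}$ will come from an interpolation argument based on circulation identities over sub-rectangles of $Q$.

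For any rectangle $R = [a_1,b_1]\times[a_2,b_2] \subset Q$, Fubini and the fundamental theorem of calculus give the Stokes-type identity (valid with no divergence assumption on $u$):
$$\tilde A(a,b) := \int_R \omega\,dx = \int_{a_2}^{b_2}\!\big[u_2(b_1,t) - u_2(a_1,t)\big]\,dt - \int_{a_1}^{b_1}\!\big[u_1(s,b_2) - u_1(s,a_2)\big]\,ds.$$
Fix a parameter $r \in (0,\pi/2]$, to be optimized, and restrict attention to rectangles with $a_i \in [0,r]$ and $b_i \in [\pi-r,\pi]$. The symmetric difference $|Q\setminus R|$ is then at most $4\pi r$, so Hölder's inequality gives $|A - \tilde A(a,b)| \leq \|\omega\|_{L^p(Q)}(4\pi r)^{1-1/p}$.

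Next, I would average $|\tilde A(a,b)|$ uniformly over $(a_1,b_1,a_2,b_2) \in [0,r]^2 \times [\pi-r,\pi]^2$. Each of the four line integrals above is bounded by $\int_0^\pi |u_i(\cdot)|\,d\tau$; averaging the piece involving $u_2(b_1,\cdot)$ over $b_1 \in [\pi-r,\pi]$ produces $r^{-1}\|u_2\|_{L^1([\pi-r,\pi]\times[0,\pi])}$, which by Cauchy–Schwarz is bounded by $\sqrt{\pi/r}\,E_0^{1/2}$. Summing the four contributions, the average of $|\tilde A|$ is at most $4\sqrt{\pi/r}\,E_0^{1/2}$. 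Averaging the triangle inequality $|A| \leq |\tilde A(a,b)| + |A - \tilde A(a,b)|$ over the same parameter set then yields
$$|A| \leq 4\sqrt{\pi/r}\,E_0^{1/2} + \|\omega\|_{L^p(Q)}(4\pi r)^{1-1/p}.$$

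The last step is to optimize $r$. Balancing the two terms gives $r_* \sim (E_0^{1/2}/\|\omega\|_{L^p(Q)})^{2p/(3p-2)}$; if this $r_*$ lies in $(0,\pi/2]$, substitution yields $|A|^{3-2/p} \leq C\|\omega\|_{L^p(Q)} E_0^{1-1/p}$, which rearranges into the desired bound. In the degenerate regime $r_* > \pi/2$ (equivalently $\|\omega\|_{L^p(Q)} \leq C E_0^{1/2}$), the trivial Hölder bound $|A| \leq \pi^2\|\omega\|_{L^p(Q)}$ combined with this inequality already implies $|A|^{3-2/p}/E_0^{1-1/p} \leq C'\|\omega\|_{L^p(Q)}$ after a short computation. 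The main obstacle in executing the plan is purely bookkeeping — tracking constants carefully to recover the explicit value $c_0 = (128\pi^2)^{-1}$ stated in the lemma — but the underlying interpolation structure is clean and, importantly, does not require $u$ to be divergence-free.
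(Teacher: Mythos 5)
Your argument is correct and proves the lemma, though it reaches the frame--decomposition in a different way than the paper. The paper also localizes the circulation to a thin frame near $\partial Q$ via Green's theorem, but it works with the one-parameter family of concentric squares $Q_r=[r,\pi-r]^2$ and uses a stopping-time selection: it takes the first $r_0$ at which $\int_{\partial Q_{r}}|u|\,ds$ drops to $|A|/2$, shows $r_0\lesssim E_0|A|^{-2}$ by integrating the Cauchy--Schwarz bound $\int_{\partial Q_r}|u|^2\,ds\gtrsim |A|^2$ over $r<r_0$, and then applies H\"older to $\omega$ on the frame $Q\setminus Q_{r_0}$, which carries at least half the circulation. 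You instead average the circulation identity over a four-parameter family of rectangles and then optimize the frame width $r$; the averaging step (bounding the mean boundary circulation by $4\sqrt{\pi/r}\,E_0^{1/2}$ via Cauchy--Schwarz on the strips) plays exactly the role of the paper's stopping time, and your degenerate regime $r_*>\pi/2$ corresponds to the paper's $\min\{64\pi^2E_0A^{-2},\pi^2\}$. Both routes are equally elementary and neither needs $\nabla\cdot u=0$. The only caveat is quantitative: if you carry out your optimization (e.g.\ choosing $r$ so that $4\sqrt{\pi/r}\,E_0^{1/2}=|A|/2$), the main regime yields roughly $c_0=(512\pi^2)^{-1}$ for $p=\infty$ rather than the stated $(128\pi^2)^{-1}$, so you would prove the lemma with a slightly smaller universal constant; since every application in the paper uses only that $c_0>0$ is universal, this is immaterial, but you should not claim to recover the exact value $(128\pi^2)^{-1}$ without further tightening.
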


\begin{proof} Without loss of generality, assume $A>0$. (If $A<0$, we can prove the estimate for $-u$, whose vorticity integral would be positive).
By Green's theorem, we have
\[
\int_{\partial Q} |u(x)|ds \geq \int_{\partial Q} u(x) \cdot dl = \int_{ Q} \omega(x) dx = A,
\]
where the integral in $ds$ denotes the (scalar) line integral with respect to arclength, and the integral in $dl$ denotes the (vector) line integral counterclockwise along $\partial Q$.

For any $ r \in[0,\frac{\pi}{2})$, let us define \[
Q_ r  := [ r , \pi - r ] \times [ r , \pi - r ].
\] Note that $Q_0=Q$, and $Q_ r $ shrinks to a point as $ r \nearrow\frac{\pi}{2}$. Let us denote
\[
 r_0 := \inf\left\{ r \in \big[0,\frac{\pi}{2}\big): \int_{\partial Q_ r} |u(x)|ds = \frac{A}{2}\right\}.
\]
Since $\int_{\partial Q_0} |u(x)|ds > A$, and $\int_{\partial Q_ r} |u(x)|ds \to 0$ as $ r \nearrow \frac{\pi}{2}$, the above definition leads to a well-defined $ r_0 \in (0,\frac{\pi}{2})$, and in addition we have 
\[
\int_{\partial Q_ r} |u(x)|ds > \frac{A}{2} \quad \text{ for all } r\in[0, r_0).
\]
Next we claim that 
\begin{equation}
\label{delta0}
 r_0 < 16\pi E_0 A^{-2}.
\end{equation} 
To show this, note that for all $0< r< r_0$, we can apply the Cauchy-Schwarz inequality on $\partial Q_{ r}$ (and use $|\partial Q_{ r}|<4\pi$) to obtain
\[ \int_{\partial Q_ r} |u|^2 \, ds \geq \frac{1}{4\pi} \left(\int_{\partial Q_ r} |u|\,ds \right)^2 >  \frac{A^2}{16\pi}. \]   
Integrating the above inequality for $ r\in(0, r_0)$ over the direction transversal to $\partial Q_ r$ (and note that $\cup_{ r\in(0, r_0)} \partial Q_{ r} = Q\setminus Q_{ r_0}$), we obtain that 
\[ E_0 \geq \int_{Q \setminus Q_{ r_0}} |u|^2 \,dx = \int_0^{ r_0}\int_{\partial Q_ r} |u|^2 \, ds d r > \frac{A^2 r_0}{16\pi},
\]
which yields the claim \eqref{delta0}. 
Note that \eqref{delta0} implies 
\begin{equation}\label{area}
|Q\setminus Q_{ r_0}| = \int_0^{ r_0}|\partial Q_ r| dr \leq \min\{ 4\pi  r_0, \pi^2\} \leq \min\{ 64\pi^2 E_0 A^{-2}, \pi^2\}.
\end{equation}
By Green's theorem and the definition of $ r_0$, 
\begin{equation}\label{intw}
\int_{Q\setminus Q_{ r_0}} \omega \,dx = \int_{\partial Q} u \cdot dl - \int_{\partial Q_{ r_0}} u \cdot dl \geq A-\frac{A}{2} = \frac{A}{2}. 
\end{equation}
Finally,  we apply H\"older's inequality to  bound $\|\omega\|_{L^p(Q)}$ from below for $p\in[1,\infty]$:
\[
\|\omega\|_{L^p(Q)} \geq \|\omega\|_{L^p(Q\setminus Q_{ r_0})} \geq \left(\int_{Q\setminus Q_{ r_0}} \omega \,dx\right) |Q\setminus Q_{ r_0}|^{-1+\frac{1}{p}}\quad\text{ for all }p\in[1,\infty].
\]
Applying the estimates \eqref{intw} and  \eqref{area} into the above inequality finishes the proof of \eqref{omega_lower} with a universal constant $c_0 = (128\pi^2)^{-1}$.
\end{proof}

\subsection{Infinite-in-time growth for inviscid Boussinesq equations}
Now we are ready to prove the infinite-in-time growth results. Let us start with Theorem \ref{invT2} for $\Omega=\mathbb{T}^2$. 

\begin{proof}[\textbf{\textup{Proof of Theorem \ref{invT2}}}] Using the Biot-Savart law $u = \nabla^\perp(-\Delta)^{-1}\omega$, one can easily check that in $\mathbb{T}^2=(-\pi,\pi]^2$, the even-odd symmetry of $\rho$ and odd-odd symmetry of $\omega$ is preserved for all times. This implies the odd-even symmetry of $u_1$ and even-odd symmetry of $u_2$ hold for all times. In particular, denoting 
\[
Q :=  [0,\pi] \times [0,\pi],
\] we have $u\cdot n=0$ on $\partial Q$ for all times.

For any $x\in\mathbb{T}^2$ and $t\geq 0$, let $\Phi_t(x)$ be the flow map defined by 
\[
\partial_t \Phi_t(x) = u(\Phi_t(x), t),\quad \Phi_0(x)=x.
\]
Using $u\cdot n=0$ on $\partial Q$ for all times (and $u=0$ at the four corners of $\partial Q$), for any $x\in\partial Q$, $\Phi_t(x)$ remains on the same side of $\partial Q$ for all times during the existence of a smooth solution. 
Combining this with the fact that $\rho$ is preserved along the flow map, the assumptions on $\rho_0$ implies
\begin{equation}\label{rho_sign}
\rho(0,x_2, t)\geq 0  ~~~\text{ and }~~~  \rho(\pi,x_2, t)\leq 0 \quad \text{ for all }x_2\in[0,\pi], t\geq 0.
\end{equation}

Note that the  odd-in-$x_2$ symmetry of $\rho_0$ yields $\rho_0(0,0)=\rho_0(0,\pi)=0$, so the supremum in
$
k_0 := \sup_{x_2\in[0,\pi]}\rho_0(0,x_2)>0
$ is achieved at some $\rho(0,a)$ for $a\in(0,\pi)$. In addition, by continuity of $\rho_0$, there exists some $b\in(0,a)$ such that $\rho_0(0,b) = k_0/2$ and $\rho_0\geq k_0/2$ on $\{0\}\times[b,a]$.

Since $u\cdot n=0$ on $\partial Q$ for all times, $\Phi_t(0,a)$ and $\Phi_t(0,b)$ remain on the line segment $\{0\}\times(0,\pi)$ for all times.  Denote 
\begin{equation}\label{hdef1} 
h(t) := |\Phi_t(0,b)-\Phi_t(0,a)|,
\end{equation}
which is strictly positive as long as $u$ remains smooth. Note that $\rho(\Phi_t(0,a),t)=k_0$ and $\rho(\Phi_t(0,b),t)=k_0/2$ for all times.
This implies
\begin{equation}\label{rhoc}
\|\nabla \rho(t)\|_{L^\infty(Q)} \geq \frac{|\rho(\Phi_t(0,b),t) - \rho(\Phi_t(0,a),t)|}{|\Phi_t(0,b)-\Phi_t(0,a)|} \geq \frac{k_0}{2} h(t)^{-1}
\end{equation}
for all times during the existence of a smooth solution.

Next let us define 
\[A(t) := \int_Q \omega(x,t) dx,
\] and we make a simple but useful observation about the monotonicity of $A(t)$. Using the symmetries and the facts $\nabla \cdot u =0$ in $Q$ and $u\cdot n=0$ on $\partial Q$, we find that 
\begin{equation}\label{w_mono}
\begin{split}
A'(t) 
&= - \int_Q u(x,t) \cdot \nabla \omega (x,t)\,dx - \int_Q \partial_{x_1} \rho (x,t)\,dx\\
&= \int_0^\pi \rho(0,x_2,t) dx_2 - \int_0^\pi \rho(\pi,x_2,t) dx_2\\
& \geq \frac{k_0}{2} h(t),
\end{split}
\end{equation}
where the inequality follows from \eqref{rho_sign}, the definition of $h(t)$, and the fact that $\rho(\cdot,t)\geq k_0/2$ on the line segment connecting $\Phi_t(0,a)$ and $\Phi_t(0,b)$.
We now integrate \eqref{w_mono} in $[0,t]$ and apply \eqref{rhoc}. 
This leads to
\begin{equation}\label{ineqA}
A(t)  \geq \frac{k_0^2}{4}\int_0^t \|\nabla \rho(\tau)\|_{L^\infty(Q)}^{-1}d \tau + A(0) .
\end{equation}

In order to apply Lemma~\ref{omega_lp}, we need to bound $\|u(t)\|_{L^2(Q)}^2$ from above.
From the same calculation in Section \ref{sec21}, the sum of the kinetic and potential energies is conserved in $\mathbb{T}^2$, hence it is also conserved in $Q$ due to the symmetries: 
\[ \frac12 \int_{Q} |u(x,t)|^2 \,dx + \int_{Q} x_2 \rho(x,t) \,dx = \frac12 \int_{Q} |u_0(x)|^2 \,dx + \int_{Q} x_2 \rho_0(x) \,dx. \]     
Since $\rho$ is advected by the flow, $\|\rho(t)\|_{L^1(Q)}$ is conserved in time, so $|\int_{Q} x_2 \rho(x,t) \,dx|\leq \pi \|\rho_0\|_{L^1(Q)}$ for all times. This implies
\[
\int_{Q} |u(x,t)|^2 \,dx \leq  \int_{Q} |u_0(x)|^2 \,dx + 4\pi \|\rho_0\|_{L^1(Q)} =: E_0(\rho_0,u_0)
\] 
for all times. Now we can apply Lemma~\ref{omega_lp} with $p=+\infty$ to conclude
\begin{equation}\label{omlb1} \|\omega(t)\|_{L^\infty} \geq c_0 E_0^{-1}A(t)^3 \geq  c_0 E_0^{-1} \left(\frac{k_0^2}{4}\int_0^t \|\nabla \rho(\tau)\|_{L^\infty}^{-1}d \tau + A(0)\right)^3,
 \end{equation}
where we used \eqref{ineqA} in the last step. Note that $A(0)$ maybe positive or negative.

On the other hand, the Lagrangian form of the evolution equation for vorticity  
\[ \frac{d}{dt} \omega(\Phi_t(x),t) = -\partial_{x_1} \rho(\Phi_t(x),t) \]
implies that 
\begin{equation}\label{omup}
\|\omega(t)\|_{L^\infty}  \leq \int_0^t \|\nabla \rho(\tau)\|_{L^\infty} d\tau + \|\omega_0\|_{L^\infty}.
\end{equation} 
Combining \eqref{omlb1} and \eqref{omup}, we arrive at
\begin{equation}\label{rhofin} 
\int_0^t \|\nabla \rho(\tau)\|_{L^\infty} d\tau + \|\omega_0\|_{L^\infty} \geq c_0 E_0^{-1} \left(\frac{k_0^2}{4} \int_0^t \|\nabla \rho(\tau)\|_{L^\infty}^{-1}d \tau + A_0\right)^3. \end{equation}
Let us denote
\[
F(t) := \int_0^t \|\nabla \rho(\tau)\|_{L^\infty} d\tau.
\]
Since Cauchy--Schwarz inequality yields
\[
\int_0^t \|\nabla \rho(\tau)\|_{L^\infty}^{-1} d \tau \geq t^2 \left(\int_0^t \|\nabla \rho(\tau)\|_{L^\infty} d\tau\right)^{-1}\geq t^2 F(t)^{-1}\quad\text{ for all }t>0,
\]
plugging it into \eqref{rhofin} gives an inequality relating $F(t)$ with itself:
\begin{equation}\label{ineq_F}
F(t) \geq c_0 E_0^{-1} \left( \frac{k_0^2}{4} t^2 F(t)^{-1} + A_0\right)^3 - \|\omega_0\|_{L^\infty}.
\end{equation}
Our goal is to show that there exists some $c_1(\rho_0,\omega_0)>0$ such that
\begin{equation}\label{claim_F}
F(t)\geq c_1(\rho_0,\omega_0) t^{3/2} \quad\text{ for all }t\geq 1.
\end{equation}
Towards a contradiction, suppose \eqref{claim_F} does not hold at some $t_1\geq 1$, so $t_1^2 F(t_1)^{-1} \geq c_1^{-1} t_1^{1/2} $. Since $t_1\geq 1$, one can choose $c_1$ sufficiently small (only depending on initial data) such that the right hand side of \eqref{ineq_F} is bounded below by $4^{-4} c_0 E_0^{-1} k_0^6 c_1^{-3}  t_1^{3/2}$. 
On the other hand, the left hand side is bounded above by $c_1 t_1^{3/2}$. Thus we obtain a contradiction if we further require $c_1< 4^{-1} (c_0 E_0^{-1} k_0^6)^{1/4}$.

Finally, note that \eqref{claim_F} directly implies $\sup_{\tau\in[0,t]}\|\nabla\rho(\tau)\|_{L^\infty} \geq c_1(\rho_0,\omega_0) t^{1/2} $ for all $t\geq 1$. For $t\in(0, 1)$, recall that the definition of $k_0$ and the fact $\rho(0,0,t)=0$ yield $\|\nabla\rho(t)\|_{L^\infty}\geq k_0/\pi \geq (k_0/\pi) t^{1/2}$ for $t\in(0,1)$. Combining these two estimates finishes the proof. 
\end{proof}

\begin{remark}
 Theorem~\ref{invT2} does not give us any infinite-in-time growth result for $\omega(\cdot,t)$. All we have is the following conditional growth estimate coming from  \eqref{omlb1}: If $\limsup_{t\to\infty} t^{-1} \|\nabla\rho(t)\|_{L^\infty} < \infty$, this must imply $\lim_{t\to\infty}\|\omega(t)\|_{L^\infty}=\infty$.
\end{remark}

\medskip
\begin{proof}[\textbf{\textup{Proof of Theorem \ref{invB}}}]
The proof is similar to the previous one, and in fact it is easier due to the uniform positivity of $\rho_0$ on $\{0\}\times[0,\pi]$. Using the Biot-Savart law, one can check the even-in-$x_1$ symmetry of $\rho$ and odd-in-$x_1$ symmetry of $\omega$ is preserved for all times. Denoting $Q := [0,\pi]\times[0,\pi]$, the symmetries and the boundary condition yield that $u\cdot n=0$ on $\partial Q$ for all times. In particular, this implies 
\begin{equation}\label{two_bounds_rho}
\rho(0,x_2,t)\geq k_0>0\text{ and }\rho(\pi,x_2,t)\leq 0\quad\text{ for all }x_2\in[0,\pi], t\geq 0
\end{equation} during the existence of a smooth solution. 

Again, let us define $A(t) := \int_Q \omega(x,t) dx$. A calculation similar to the previous proof shows that in this case 
\[
A'(t) \geq \int_0^\pi \rho(0,x_2,t) dx_2 - \int_0^\pi \rho(\pi,x_2,t) dx_2 \geq k_0\pi,
\]
where the last inequality follows from \eqref{two_bounds_rho}. This gives us a lower bound
\begin{equation}\label{ineq_A}
A(t) \geq k_0\pi t + A(0) \quad\text{ for all }t\geq 0.
\end{equation}
An identical argument as in the proof of Theorem~\ref{invT2} gives $\int_\Omega |u(x,t)|^2\,dx\leq E_0(\rho_0,u_0)$ uniformly in time, thus we can apply Lemma~\ref{omega_lp} to obtain
\begin{equation}\label{omega_lp2}
\|\omega\|_{L^p(Q)} \geq c_0 E_0^{-1+\frac{1}{p}}|A(t) |^{3-\frac{2}{p}} \quad\text{ for all }p\in[1,\infty].
\end{equation}
Also, note that Green's theorem yields
\begin{equation}\label{bd_u}
A(t) = \int_{\partial Q} u\cdot dl \leq  4\pi\|u(t)\|_{L^\infty} .
\end{equation}
Regarding the growth of $ \nabla \rho$, note that \eqref{omup} still holds in a strip, so
\begin{equation}\label{rho_lower}
\sup_{\tau \in [0,t]}\|\nabla \rho(\tau)\|_{L^\infty} \geq t^{-1} (\|\omega(t)\|_{L^\infty}-\|\omega_0\|_{L^\infty}) \quad\text{ for all }t>0.
\end{equation}

Below we discuss two cases:

\noindent \textbf{Case 1.} $A(0)\geq 0$. In this case \eqref{ineq_A} gives 
\[
A(t) \geq k_0\pi t \quad\text{ for all }t>0.
\] 
We then apply \eqref{omega_lp2} and \eqref{bd_u} to obtain lower bounds for $\|\omega(t)\|_{L^p(Q)}$ and $\|u(t)\|_{L^\infty}$:
\begin{align}\label{w_bd_p}
\|\omega(t)\|_{L^p(Q)} &\geq c_1(\rho_0,\omega_0) t^{3-\frac{2}{p}} \quad\text{ for all }p\in[1,+\infty], t\geq 0,\\
\|u(t)\|_{L^\infty(Q)} &\geq \frac{1}{4}k_0 t \hspace{2cm}\text{ for all }t\geq 0.
\end{align}

Regarding the growth of $\nabla\rho$, we apply \eqref{w_bd_p} with $p=+\infty$ and combine it with \eqref{rho_lower} to obtain
\[
\sup_{\tau \in [0,t]}\|\nabla \rho(\tau)\|_{L^\infty} \geq  t^{-1} \big(c_1(\rho_0,\omega_0) t^3 - \|\omega_0\|_{L^\infty}\big),
\]
which implies 
\[
\sup_{\tau \in [0,t]}\|\nabla \rho(\tau)\|_{L^\infty} \geq c_1(\rho_0,\omega_0) t^2  \quad\text{for all }t\geq \left(\frac{\|\omega_0\|_{L^\infty}}{c_1(\rho_0,\omega_0)}\right)^{1/3}.
\]
Combining this large time estimate with the trivial lower bound $\|\nabla \rho(t)\|_{L^\infty}\geq \frac{k_0}{\pi}$ for all times, there exists some $c_2(\rho_0,\omega_0)>0$ such that
\begin{equation}\label{rho_temp}
\sup_{\tau \in [0,t]}\|\nabla \rho(\tau)\|_{L^\infty} \geq c_2(\rho_0,\omega_0) t^2  \quad\text{for all }t\geq 0.
\end{equation}

\noindent \textbf{Case 2.} $A_0<0$. In this case the right hand side of \eqref{ineq_A} becomes positive for $t>|A_0|/(k_0\pi)$. In addition, we have
\[
A(t) \geq \frac{1}{2}k_0\pi t \quad\text{ for all }t\geq T_0 =: \frac{2|A_0|}{k_0\pi}.
\]
Once we obtain this  (positive) linear lower bound for $t\geq T_0$, we can argue as in Case 1 to obtain lower bounds for $\|\omega(t)\|_{L^p(Q)}$, $\|u(t)\|_{L^\infty}$ and $\sup_{\tau \in [0,t]}\|\nabla \rho(\tau)\|_{L^\infty}$ for all $t\geq T_0$. In addition, combining the lower bound for $\|\nabla \rho(t)\|_{L^\infty}$ for $t\geq T_0$ with the trivial lower bound $\|\nabla \rho(t)\|_{L^\infty}\geq \frac{k_0}{\pi}$ for all times, we again have \eqref{rho_temp} with a smaller coefficient $c(\rho_0,\omega_0)>0$ that only depends on the initial data.
\end{proof}

\begin{remark}\label{rmk_stable}
If the assumptions on symmetries of $\rho_0$ and $\omega_0$ are dropped, the following simple argument still gives $\|\omega(t)\|_{L^1}\gtrsim t$ for $t\gg 1$. Let $Q_t := \{\Phi_t(x): x\in[0,\pi]\times[0,\pi]\}$, and denote by $\Gamma_t^1 := \{\Phi_t(x): x\in \{0\} \times[0,\pi]\}$ and $\Gamma_t^2 := \{\Phi_t(x): x\in\{\pi\}\times[0,\pi]\}$ the left and right boundary of $Q_t$. (Since $u\cdot n=0$ on $\partial \Omega$, the top and bottom boundaries of $Q_t$ remain on $\partial \Omega$ for all times). In addition, since $\rho$ is preserved along the flow,  at each $t$ we have $\rho(\cdot,t)|_{\Gamma_t^1} \geq k_0>0$, and $\rho(\cdot,t)|_{\Gamma_t^2} \leq 0$. Thus a computation similar to \eqref{w_mono} in the moving domain $Q_t$ gives
 \[
\frac{d}{dt}  \int_{Q_t} \omega(x,t) dx  = \int_{Q_t}-\partial_{x_1}\rho(t)dx \geq k_0 \pi \quad\text{ for all }t\geq 0,
\]
Therefore, as long as the solution $(\rho,\omega)$ remains smooth, we have
\begin{equation}\label{omega_l1}
\lVert \omega(t)\rVert_{L^1} \ge \int_{Q_t}\omega(x,t) dx \geq k_0\pi t - \|\omega_0\|_{L^1} \quad\text{ for all }t\geq 0.
\end{equation}
However, since $Q_t$ is in general largely deformed from a square for $t\gg 1$, we are not able to apply Lemma~\ref{omega_lp} to obtain faster growth rate for higher $L^p$ norms.

Note that given any  steady state $\omega_s$ of 2D Euler on the strip $\Omega$, $(0,\omega_s)$ is automatically a steady state of the inviscid Boussinesq equations \eqref{Boussinesqw}. Thus the infinite-in-time growth estimate \eqref{omega_l1} directly implies that any such steady state (with zero density) is nonlinearly unstable, in the sense that for any $0<k_0\ll 1$, an arbitrarily small perturbation $\rho_0=k_0 \cos(x_1), \omega_0=\omega_s$ leads to $\lim_{t\to\infty}\lVert \omega(t)\rVert_{L^1} =\infty$. See \cite{bedrossian2021nonlinear, MR3974167, MR4322283, MR3815212, MR4451473, MR4097325, zillinger2022stability} for more results on stability/instability of steady states of the inviscid or viscous Boussinesq equations.
\end{remark}

\subsection{Application to 3D axisymmetric Euler equation}

In this subsection we will prove Theorem~\ref{3dEth}, whose proof is a close analog of Theorem~\ref{invB}.

\begin{proof}[\textbf{\textup{Proof of Theorem~\ref{3dEth}}}] Using the Biot-Savart law, one can easily check that $\omega^\theta$ remains odd in $z$ and $u^\theta$ remains even in $z$ for all times while the solution stays smooth. Combining these symmetries with the Biot--Savart law \eqref{bs_euler} gives $u^z=0$ for $z=0$ and $z=\pi$ for all times.  
For a point $x$ on the $rz$-plane, let us define the flow-map $\Phi_t(x): [\pi,2\pi]\times\mathbb{T} \to [\pi,2\pi]\times\mathbb{T}$, given by
\[
\frac{d}{dt} \Phi_t(x) = (u_r(\Phi_t(x),t), u_z(\Phi_t(x),t)).
\]
Since $u_z=0$ on $z=\pi$, for any $x\in[\pi,2\pi]\times\{\pi\}$, we have $\Phi_t(x)$ remains on $[\pi,2\pi]\times\{\pi\}$.
From the first equation in \eqref{3dE}, we have $ru^{\theta}$ is conserved along the trajectory. Thus for any point $(r,\pi)$ with $r \in [\pi,2\pi]$, we have
\[
ru^{\theta}(r,\pi,t) \geq \pi u^\theta_0(\Phi^{-1}_t(r,\pi),0) \geq  \pi k_0,
\]
where the last inequality follows from the assumption $u^\theta_0\geq k_0>0$ on $z=\pi$ and the fact that $\Phi^{-1}_t(r,\pi) \in [\pi,2\pi]\times\{\pi\}$.
This implies
\begin{equation}\label{u_bd1}
u^{\theta}(r,\pi,t) \geq \frac{1}{2}k_0>0 \quad\text{ for all }r\in[\pi,2\pi], t\geq 0.
\end{equation}
Applying a similar argument for $z=0$, the assumption $|u_0^\theta|<\frac{k_0}{8}$ on $z=0$ leads to
\begin{equation}\label{u_bd2}
|u^{\theta}(r,0,t)| \leq \frac{1}{4}k_0 \quad\text{ for all }r\in[\pi,2\pi], t\geq 0.
\end{equation}

Defining $Q:=[\pi,2\pi] \times [0,\pi]$ to be a square on the $rz$-plane, the above symmetry results give $(u^r, u^z) \cdot n = 0$ on $\partial Q$ for all times.
Using this boundary condition as well as the divergence-free property of $(r u^r, r u^z)$ in $(r,z)$ (which follows from \eqref{bs_euler}), we apply the divergence theorem to obtain
\[ 
\begin{split}
\frac{d}{dt} \int_Q \omega^\theta(r,z,t) \, drdz &= \int_Q (ru_r, ru_z) \cdot \nabla_{r,z}\left(\frac{\omega^\theta}{r}\right) + \frac{\partial_z (u^\theta)^2}{r}\,drdz \\
&= \int_Q \frac{\partial_z (u^\theta)^2}{r}\,drdz \\
&= \int_\pi^{2\pi} \frac1r \left(u^\theta(r,\pi,t)^2 - u^\theta (r,0,t)^2 \right) dr \\
&\geq (\ln 2) \frac{3}{16}k_0^2  \geq  \frac{1}{10}k_0^2 
\end{split}
\]
for all times during the existence of a smooth solution, where the last inequality follows from \eqref{u_bd1} and \eqref{u_bd2}. This directly implies
\[
A(t) := \int_Q \omega^\theta(r,z,t) \, drdz \geq \frac{1}{10}k_0^2  t + \int_Q \omega^\theta_0 \, drdz.
\]
In particular, if $\int_Q \omega^\theta_0 drdz \geq 0$, this implies
\begin{equation}\label{a1}
A(t) \geq\frac{1}{10}k_0^2  t \quad\text{ for all }t\geq 0;
\end{equation}
and if $\int_Q \omega^\theta_0 drdz < 0$, we have
\begin{equation}\label{a2}
A(t)  \geq \frac{1}{20}k_0^2  t \quad\text{ for all }t\geq T_0 =: 20k_0^{-2} \left|\int_Q \omega^\theta_0 \, drdz\right|.
\end{equation}
Another ingredient we need is the energy conservation. It is well-known that the kinetic energy is conserved for 3D Euler equation, i.e. $\int_\Omega |u(x,t)|^2 \,dx = \int_\Omega |u_0|^2 \,dx$. Since $\Omega$ has an inner boundary with positive radius $\pi$, this implies in the domain $Q$ in the $rz$ plane, we also have
\[
\int_Q \left( u^r(r,z,t)^2 + u^z(r,z,t)^2\right) drdz \leq E_0(u_0).
\]
Recall that $\omega^\theta$ and $(u^r,u^z)$ are related by $\omega^\theta = \partial_r u^z - \partial_z u^r$. Thus we can apply Lemma~\ref{omega_lp} to conclude that
\[
\|\omega^\theta(t)\|_{L^p(Q)}\geq c_0  E_0^{-1+\frac{1}{p}}|A(t)|^{3-\frac{2}{p}} \quad\text{ for all }p\in[1,\infty], t\geq 0.
\]
which directly leads to \eqref{omlbiE} once we plug the estimates  \eqref{a1} and \eqref{a2} of $A(t)$ into the above equation.

Finally, applying the Green's theorem in $Q$, we have
\[  A(t) =\int_Q \omega^\theta dr dz = \int_Q (\partial_r u^z - \partial_z u^r)\, dr dz = \int_{\partial Q} u \cdot dl \leq 4\pi\|u(t)\|_{L^\infty}. \]
Combining this with the estimates  \eqref{a1} and \eqref{a2} directly gives \eqref{euler_u_bd}, finishing the proof. 
\end{proof}

\begin{appendix}
\section{Proof of Lemma~\ref{smallinx1}}
In the appendix we prove Lemma~\ref{smallinx1}. The proof is almost the same as in \cite{KY} other than a small improvement in part (a). We sketch a proof for both parts below for the sake of completeness.

\begin{proof}[\textup{\textbf{Proof of Lemma~\ref{smallinx1}, part (a)}}]
Here the proof mostly follows from \cite[equation (3.4)]{KY}, except that we make a small improvement dropping the assumption $\|\partial_1 \mu\|_{\dot{H}^{-1}}^2<\frac{1}{4}\|\mu\|_{L^2}^2$ in \cite{KY}.
Let us define
\[
\delta := \|\partial_{1} \mu\|_{\dot{H}^{-1}(\R^2)}^2,\quad A  := \|\mu\|_{L^2(\R^2)}^2. \]
Clearly, $\delta = \int_{\mathbb{R}^2} \frac{\xi_1^2}{|\xi|^2} |\hat \mu|^2 d\xi \leq A$. Let us discuss the following two cases. 

\noindent \emph{Case 1.} $\delta < \frac{A}{4}$. In this case let us define $
D_\delta:= \left\{(\xi_1,\xi_2): \frac{|\xi_1|}{|\xi|} \geq \sqrt{\frac{2\delta}{A}}\right\}.
$
By definition of $D_\delta$, we have 
\[
 \delta \geq \int_{D_\delta}  \frac{\xi_1^2}{|\xi|^2} |\hat\mu(\xi)|^2 d\xi \geq \frac{2\delta}{A} \int_{D_\delta} |\hat \mu|^2 d\xi.
\]
This gives $\int_{D_\delta} |\hat \mu|^2 d\xi \leq \frac{1}{2}A$, thus $\int_{D_\delta^c} |\hat \mu|^2 d\xi \geq \frac{1}{2}A$. Note that $D_\delta^c$ can be expressed in polar coordinates as $D_\delta^c = \{(r \cos\theta,r\sin\theta): r\geq 0, |\cos\theta| < \sqrt{2\delta/A}$\}.

Since $\mu\in C_c^\infty(\mathbb{R}^2)$, we have $\|\hat\mu\|_{L^\infty(\R^2)} \leq (2\pi)^{-1}\|\mu\|_{L^1(\R^2)} =: B$. Let $h_\delta>0$ be such that $|D_\delta^c \cap \{|\xi_2|<h_\delta\}| = (4B^2)^{-1} A$, which we will estimate later. Such definition gives
\[
\int_{D_\delta^c \cap \{|\xi_2|\geq h_\delta\}} |\hat\mu|^2 d\xi = \int_{D_\delta^c}  |\hat\mu|^2 d\xi - \int_{D_\delta^c \cap \{|\xi_2|<h_\delta\}}  |\hat\mu|^2  d\xi \geq \frac{1}{2}A  - (4B^2)^{-1} A B^2= \frac{1}{4}A,
\]
which implies
\begin{equation}\label{Hs}
\|\mu\|_{\dot{H}^s(\R^2)}^2 \geq \int_{\mathbb{R}^2} |\xi_2 |^{2s} |\hat\mu|^2 d\xi \geq h_\delta^{2s} \int_{D_\delta^c \cap \{|\xi_2|\geq h_\delta\}} |\hat\mu|^2 d\xi \geq \frac{A}{4} h_\delta^{2s}.
\end{equation}
To estimate $h_\delta$, let us denote $\theta_0 := \cos^{-1}(\sqrt{\frac{2\delta}{A}})$. Since $D_\delta^c \cap \{|\xi_2|<h_\delta\}$ consists of two identical triangles with height $h_\delta$ and base $2h_\delta \cot\theta_0$, we have
\[
 (4B^2)^{-1} A = |D_\delta^c \cap \{|\xi_2|<h_\delta\}| = 2h_\delta^2 \cot\theta_0 \leq 4\sqrt{\delta} A^{-1/2}h_\delta^2,
\]
where the inequality follows from  $\cos\theta_0 = \sqrt{\frac{2\delta}{A}}$ and $\sin\theta_0 = \sqrt{1-\frac{2\delta}{A}} \geq 1/\sqrt{2}$, due the assumption $\delta < A/4$ in case 1.
Therefore $h_\delta \geq (4B)^{-1} A^{3/4} \delta^{-1/4}$. Plugging it into \eqref{Hs} yields
\[
\|\mu\|_{\dot{H}^s(\R^2)} \geq  \frac{\sqrt{A}}{2} h_\delta^{s} \geq c(s,A,B) \delta^{-s/4}, 
\]
finishing the proof of \eqref{smallinx1} in case 1.

\noindent\emph{Case 2.} $\delta \geq \frac{A}{4}$. As in case 1, let us define $\|\hat\mu\|_{L^\infty(\R^2)} \leq (2\pi)^{-1}\|\mu\|_{L^1(\R^2)} =: B$. Let $r_0 := (A/(2\pi B^2))^{1/2}$. Such definition leads to
\[
\int_{B(0,r_0)} |\hat \mu|^2 d\xi \leq \pi r_0^2 \|\hat\mu\|_{L^\infty(\R^2)}^2  \leq \frac{A}{2},
\]
thus
\[
\|\mu\|_{\dot{H}^s(\R^2)}^2 \geq \int_{B(0,r_0)^c} |\xi |^{2s}  |\hat \mu|^2 d\xi \geq r_0^{2s} \frac{A}{2} \geq c(s,A,B) \delta^{-s/4},
\]
where the last inequality follows from the assumption $\delta \geq \frac{A}{4}$ in case 2. This finishes the proof of part (a).\end{proof}

\begin{proof}[\textup{\textbf{Proof of Lemma~\ref{smallinx1}, part (b)}}]
 This part is equivalent with the last (unnumbered) equation in the proof of Theorem 1.2 in \cite{KY}. We sketch a proof below for completeness, and also to clarify the dependence of $c_2(s, \int_{\T\times[0,\pi]} \mu^{1/3} dx)$ in \eqref{eq_delta2}.
 
  For any $k=(k_1,k_2)\in \Z^2$, the Fourier coefficient $\hat\mu(k_1,k_2)$  can be written as
\begin{equation}\label{rho_g}
\begin{split}
\hat\mu(k_1,k_2)
 &= \frac{1}{(2\pi)^2} \int_{\T} e^{-ik_1 x_1} \int_{\T} e^{-ik_2 x_2} \mu(x_1, x_2) dx_2 dx_1\\
 &= \frac{1}{(2\pi)^2}  \int_{\T} e^{-ik_1 x_1} (-2i) \underbrace{\int_{0}^\pi \sin(k_2 x_2) \mu(x_1, x_2) dx_2}_{=:g(x_1,k_2)} dx_1,
\end{split}
\end{equation}
where the last identity is due to $\mu$ being odd in $x_2$. With $g(x_1,k_2)$ defined in the last line of \eqref{rho_g}, when setting $k_2=1$, we claim that $g(x_1,1)$ satisfies the following properties:
\begin{enumerate}
\item[(a)] $g(x_1,1)$ is even in $x_1$ and nonnegative for all $x_1\in\mathbb{T}$.
\item[(b)] $g(0,1)=0$.
\item[(c)] $\int_{\T} g(x_1, 1) dx_1 \geq c(\int_D \mu(x)^{1/3} dx)^3$ for some universal constant $c>0$.
\end{enumerate}
Here property (a) follows from the facts that $\mu$ is even in $x_1$ and nonnegative on $D:= [0,\pi]^2$. Property (b) follows from $\mu(0,\cdot)\equiv 0$. For property (c), note that
\[
\int_{\T} g(x_1,1) dx_1 = 2 \int_0^\pi g(x_1,1) dx_1 = 2\int_D \sin(x_2) \mu(x) dx.
\]
Combining H\"older's inequality with the fact that $\sin(x_2)\mu(x)\geq 0$ in $D$, we have
\[
\int_D \sin(x_2) \mu(x) dx \geq  \left(\int_D \sin(x_2)^{-1/2} dx\right)^{-2} \left(\int_D \mu(x)^{1/3} dx\right)^3 \geq c_0 \left(\int_D \mu(x)^{1/3} dx\right)^3,
\]
for some universal constant $c_0>0$. This proves property (c).

For any $k_1\in\mathbb{Z}$, let $\hat g(k_1)$ be the Fourier coefficient of $g(\cdot, 1)$, that is,
\begin{equation}\label{rho_g_2}
\hat g(k_1) :=\frac{1}{2\pi} \int_{\T} e^{-ik_1 x_1} g(x_1,1) dx.
\end{equation}
Denote by $
\bar g := \frac{1}{2\pi}\int_{\T} g(x_1,1) dx_1
$ the average of $g(\cdot,1)$. Applying the definition of $\hat g$ to \eqref{rho_g} gives
\begin{equation}
\label{eq_k_rho}
 \hat\mu(k_1,1)=\frac{-2i}{2\pi}\hat g(k_1)\quad\text{ for any }k_1\in \Z.
 \end{equation} This allows us to bound $\delta := \|\partial_{1}\mu\|_{\dot{H}^{-1}(\T^2)}^2$ from below as
\begin{equation}
\begin{split}
\delta \geq (2\pi)^2\sum_{k_1\in \Z\setminus\{0\}} \frac{k_1^2}{k_1^2+1}|\hat\mu(k_1,1)|^2
&\geq 2\sum_{k_1\in \Z\setminus\{0\}} |\hat g(k_1)|^2 = \frac{1}{\pi} \int_{\T} |g(x_1,1)-\bar g|^2 dx_1.
\end{split}
\end{equation}

By property (c), $\bar g\geq c(\int_D \mu(x)^{1/3} dx)^3>0$.
Applying \cite[Lemma~3.3]{KY}  to $g(x_1,1)$ yields
\begin{equation}\label{ineq_g}
\|g(\cdot,1)\|_{\dot{H}^{s}(\T)} \geq c\left(s, \int_D \mu(x)^{1/3} dx\right) \delta^{-s+\frac{1}{2}} \quad\text{ for all }s>\frac{1}{2}.
\end{equation}
Note that
\begin{equation}\label{ineq_g2}
\|g(\cdot,1)\|_{\dot{H}^s(\T)}^2  
= 2\pi^3 \sum_{k_1\neq 0} |k_1|^{2s}|\hat\mu(k_1,1)|^2 \leq\frac{\pi}{\sqrt{2}}  \|\partial_{1}\mu\|_{\dot{H}^{s-1}(\T^2)}^2 \leq \frac{\pi}{\sqrt{2}}  \|\mu\|_{\dot{H}^{s}(\T^2)}^2,
\end{equation}
where the first inequality follows by the assumption $s>1/2$. Finally, combining \eqref{ineq_g} and \eqref{ineq_g2} gives \eqref{eq_delta2}. 
 \end{proof}

\end{appendix}
\bibliographystyle{abbrv}
\bibliography{references}

\end{document}